\newtheorem{theorem}{Theorem}[section]
\newtheorem{lemma}[theorem]{Lemma}
\newtheorem{corollary}[theorem]{Corollary}
\theoremstyle{definition}
\theoremstyle{remark}
\newtheorem{remark}[theorem]{Remark}
\numberwithin{equation}{section}
\def\cF{\mathcal{F}}
\def\om{\mathrm{om}}
\def\oc{\mathrm{oc}}
\def\ffi{\varphi}
\def\bR{\mathbb{R}}
\def\supp{\mathrm{supp}\,}
\def\eps{\varepsilon}
\def\conv{\mathrm{conv}\,}
\def\ex{\mathrm{ex}\,}
\begin{document}
\setcounter{page}{1}

\title[The non-negative operator convex functions on $(0,\infty)$]{Conic structure of the non-negative operator convex functions on $(0,\infty)$}

\author[U.\ Franz, F.\ Hiai]{Uwe Franz$^1$ and Fumio Hiai$^2$$^{*}$}

\address{$^{1}$ D\'epartement de math\'ematiques de Besan\c con,
Universit\'e de Franche-Comt\'e, 16, route de Gray, 25 030 Besan\c con cedex, France}
\email{\textcolor[rgb]{0.00,0.00,0.84}{uwe.franz@univ-fcomte.fr}}

\address{$^{2}$ Tohoku University (Emeritus),
Hakusan 3-8-16-303, Abiko 270-1154, Japan}
\email{\textcolor[rgb]{0.00,0.00,0.84}{hiai.fumio@gmail.com}}

\dedicatory{Dedicated to Professor Tsuyoshi Ando with admiration}

\subjclass[2010]{Primary 47A56; Secondary 47A60.}

\keywords{operator convex function, operator monotone functions,
convex cone, extreme ray, facial cone, simplicial cone.}

\date{Received: xxxxxx; Revised: yyyyyy; Accepted: zzzzzz.
\newline \indent $^{*}$ Corresponding author}

\begin{abstract}
The conic structure of the convex cone of non-negative operator convex functions on $(0,\infty)$ (also on $(-1,1)$) is clarified. We completely determine the extreme rays, the closed faces, and the simplicial closed faces of this convex cone.
\end{abstract} \maketitle

\section{Introduction}

\noindent A real continuous function $f$ on an interval $J$ of the real line is said to be
{\it operator monotone} if $A\ge B$ implies $f(A)\ge f(B)$ for bounded self-adjoint
operators $A,B$ with spectra in $J$ on an infinite-dimensional (separable) Hilbert space,
where $f(A)$ denotes the usual functional calculus of $A$, and it is said to be
{\it operator monotone decreasing} if $-f$ is operator monotone. Also, the function $f$
is said to be {\it operator convex} if $f((1-t)A+tB)\le(1-t)f(A)+tf(B)$ for all such $A,B$
and $t\in(0,1)$. These operator monotonicity or convexity is equivalent to that $f$
satisfies the same property for every $n\times n$ Hermitian matrices $A,B$ with
eigenvalues in $J$ (i.e., $f$ is matrix monotone or convex of order $n$) for every
positive integer $n$. The theory of operator/matrix monotone functions was initiated by
the celebrated paper of L\"owner \cite{Lo}, which was soon followed by Kraus \cite{Kr} on
operator/matrix convex functions. A modern treatment of operator monotone and convex
functions was established by Hansen and Pedersen in their seminal paper \cite{HP}. In
\cite{Do,An,Bh} (also \cite{Hi}) are found comprehensive expositions on the subject matter.

In L\"owner's theory it is well-known (see e.g., \cite[Sect.\ V.4]{Bh},
\cite[Sect.\ 2.7]{Hi}) that a non-negative operator monotone function $f$ on the interval
$(0,\infty)$ admits the integral representation
$$
f(x)=\int_{[0,\infty]}{x(1+\lambda)\over x+\lambda}\,d\mu(\lambda),\qquad
x\in(0,\infty),
$$
where $\mu$ is a unique finite positive measure on $[0,\infty]$ and
$x(1+\lambda)/(x+\lambda)$ means $x$ for $\lambda=\infty$. In the paper we always consider
$[0,\infty]=[0,\infty)\cup\{\infty\}$ as the one-point compactification of $[0,\infty)$.
The above unique representation shows that the convex cone $\cF_\om^+(0,\infty)$ of
non-negative operator monotone functions on $(0,\infty)$ is order isomorphic to that of
finite positive measures on $[0,\infty]$ (identified with the positive part of the dual
space of the Banach space $C([0,\infty])$ of real continuous functions on $[0,\infty]$).
Hence $\cF_\om^+(0,\infty)$ is a simplicial convex cone whose base
$\{f\in\cF_\om^+(0,\infty):f(1)=1\}$ is a Bauer simplex. Thus, the conic structure of
$\cF_\om^+(0,\infty)$ is very simple. However, the convex cone $\cF_\oc^+(0,\infty)$ of
non-negative operator convex functions on the interval $(0,\infty)$ is not so simple as
$\cF_\om^+(0,\infty)$, whose conic structure is our main concern in the present paper.

In the previous paper \cite{FHR} we introduced the notion of {\it operator $k$-tone}
functions on an open interval, extending operator monotone and convex functions. The
integral representations for operator $k$-tone functions are useful here, so that we
state them in Section 2 of this paper in the form specialized to the case $k=2$ (the
case of operator convex functions). In Section 3 we determine all extreme rays of
$\cF_\oc^+(0,\infty)$. To do this, we consider facial subcones
$F_\alpha:=\{f\in\cF_\oc^+(0,\infty):f(\alpha)=0\}$ for $0\le\alpha<\infty$ and
$F_\infty:=\{f\in\cF_\oc^+(0,\infty):\mbox{$f$ is non-increasing}\}$. In Section 4 we
determine all closed faces and all simplicial closed faces of $\cF_\oc^+(0,\infty)$.
Finally in Section 5, similar discussions are given for the convex cone of non-negative
operator convex functions on $(-1,1)$ though the detailed proofs are omitted.

\section{Integral representations}

The notion of operator $k$-tone functions in \cite{FHR} extends operator monotone and
operator convex functions. In fact, operator $1$-tone and operator $2$-tone functions are
operator monotone and operator convex functions, respectively. Since we are concerned
with operator convex functions, we here state the results \cite[Theorems 4.1, 5.1]{FHR}
under specialization to operator convex functions (the case $k=2$). They will play an
essential role in this paper.

\begin{theorem}\label{T-2.1}
A real function $f$ on $(-1,1)$ is operator convex if and only if it is $C^1$ on $(-1,1)$
and there exists a finite positive measure $\mu$ on $[-1,1]$ such that, for any choice of
$\alpha\in(-1,1)$,
$$
f(x)=f(\alpha)+f'(\alpha)(x-\alpha)
+\int_{[-1,1]}{(x-\alpha)^2\over(1-\lambda x)(1-\lambda\alpha)}\,d\mu(\lambda),
\qquad x\in(-1,1).
$$
\end{theorem}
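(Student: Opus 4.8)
\medskip
\noindent\textit{Proof proposal.}\enspace
This is \cite[Theorems~4.1 and 5.1]{FHR} specialized to $k=2$; here is how I would reconstruct a direct argument. For the ``only if'' direction the idea is to obtain the representation first at the base point $\alpha=0$ from L\"owner's theory and then to transport it to an arbitrary $\alpha$ by an elementary re-expansion of the kernel $x^{2}/(1-\lambda x)$; for the ``if'' direction, to verify that each kernel occurring in the integral is operator convex and that positive-measure averages of operator convex functions remain operator convex. That an operator convex function on an open interval is $C^{1}$ (indeed $C^{2}$) is classical, so this half of the equivalence may be taken as known. Assuming $f$ operator convex, subtract $f(0)+f'(0)x$, which affects neither operator convexity nor, after re-adding it, the asserted identity, so that we may assume $f(0)=f'(0)=0$. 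Then by the Hansen--Pedersen characterization \cite{HP} the function $g(x):=f(x)/x$, with $g(0):=0$, is operator monotone on $(-1,1)$, and L\"owner's integral representation on $(-1,1)$ yields a finite positive measure $\mu$ on $[-1,1]$ with $g(x)=\int_{[-1,1]}\frac{x}{1-\lambda x}\,d\mu(\lambda)$, hence $f(x)=x\,g(x)=\int_{[-1,1]}\frac{x^{2}}{1-\lambda x}\,d\mu(\lambda)$, which is the asserted formula at $\alpha=0$.

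To pass to general $\alpha\in(-1,1)$ I would use, for $\lambda\ne0$, the partial-fraction identity $\frac{x^{2}}{1-\lambda x}=-\frac{x}{\lambda}-\frac{1}{\lambda^{2}}+\frac{1}{\lambda^{2}(1-\lambda x)}$: subtracting its degree-one Taylor polynomial at $\alpha$ kills the affine terms, and with $\frac{1}{1-\lambda x}-\frac{1}{1-\lambda\alpha}=\frac{\lambda(x-\alpha)}{(1-\lambda x)(1-\lambda\alpha)}$ one gets
$$
\frac{x^{2}}{1-\lambda x}=\bigl(\text{degree-one Taylor polynomial at }\alpha\bigr)+\frac{(x-\alpha)^{2}}{(1-\lambda x)(1-\lambda\alpha)^{2}},
$$
and the same identity holds trivially for $\lambda=0$. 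Integrating against $\mu$ and putting $d\mu_{\alpha}(\lambda):=(1-\lambda\alpha)^{-1}\,d\mu(\lambda)$, a finite positive measure since $1-\lambda\alpha\ge1-|\alpha|>0$ on $[-1,1]$, I obtain $f(x)=\ell(x)+\int_{[-1,1]}\frac{(x-\alpha)^{2}}{(1-\lambda x)(1-\lambda\alpha)}\,d\mu_{\alpha}(\lambda)$ for some affine function $\ell$. Evaluating at $x=\alpha$ gives $\ell(\alpha)=f(\alpha)$, and differentiating at $x=\alpha$ gives $\ell'(\alpha)=f'(\alpha)$, because the integral term is $O((x-\alpha)^{2})$ with vanishing derivative at $\alpha$ and differentiation under the integral sign is licit (on any compact subinterval of $(-1,1)$ the integrand and its $x$-derivative are bounded uniformly in $\lambda\in[-1,1]$). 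Hence $\ell(x)=f(\alpha)+f'(\alpha)(x-\alpha)$, which is the claimed formula with $\mu=\mu_{\alpha}$.

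For the ``if'' direction, fix $\lambda\in[-1,1]$ and set $\ffi_{\lambda}(x):=\frac{(x-\alpha)^{2}}{(1-\lambda x)(1-\lambda\alpha)}$. The partial-fraction identity above exhibits $\ffi_{\lambda}$ as a non-negative constant times $x\mapsto(1-\lambda x)^{-1}$ plus an affine function. Since $|\lambda|\le1$ forces $\lambda^{-1}\notin(-1,1)$, and since $t\mapsto1/t$ is operator convex on $(0,\infty)$ while $t\mapsto-1/t$ is operator convex on $(-\infty,0)$, for either sign of $\lambda$ the function $x\mapsto(1-\lambda x)^{-1}=-\lambda^{-1}(x-\lambda^{-1})^{-1}$ is operator convex on $(-1,1)$ (it is constant when $\lambda=0$); hence so is each $\ffi_{\lambda}$. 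Now suppose $f(x)=f(\alpha)+f'(\alpha)(x-\alpha)+\int_{[-1,1]}\ffi_{\lambda}(x)\,d\mu(\lambda)$ with $\mu$ finite positive. For a self-adjoint operator $C$ whose spectrum lies in a compact subinterval of $(-1,1)$, the uniform bound $\|\ffi_{\lambda}(C)\|\le\sup_{x}|\ffi_{\lambda}(x)|$ and Fubini give $f(C)=f(\alpha)I+f'(\alpha)(C-\alpha I)+\int_{[-1,1]}\ffi_{\lambda}(C)\,d\mu(\lambda)$; applying this at $C=(1-t)A+tB$, $C=A$, $C=B$ and pairing with an arbitrary vector converts the operator convexity of each $\ffi_{\lambda}$ into $f((1-t)A+tB)\le(1-t)f(A)+tf(B)$. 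Thus $f$ is operator convex.

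The real depth lies entirely in the imported classical facts — L\"owner's representation and the Hansen--Pedersen reduction $f\leftrightarrow f(x)/x$; a fully self-contained treatment would have to reprove that $f(x)/x$ is operator monotone, which is the genuinely hard step. Everything else is bookkeeping: the partial-fraction/Taylor identity, the uniform estimates justifying differentiation and Fubini under the integral, and the (easy to get backwards) operator convexity of $x\mapsto(1-\lambda x)^{-1}$ on $(-1,1)$ for $|\lambda|\le1$. One point worth flagging is that the representing measure depends on the chosen base point — it equals $(1-\lambda\alpha)^{-1}$ times the one at $\alpha=0$ — so ``there exists a finite positive measure'' should be read as asserting, for each chosen $\alpha$, the existence of such a measure.
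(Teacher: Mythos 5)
Your argument is essentially the one behind the cited source: the paper does not prove Theorem \ref{T-2.1} but imports it from \cite[Theorem 4.1]{FHR}, and the route there is exactly yours --- reduce to an operator monotone function via a divided difference, apply L\"owner's representation, and re-expand the kernel at a general base point. (Two small quibbles: the fact that $f(x)/x$ is operator monotone on the two-sided interval $(-1,1)$ when $f$ is operator convex with $f(0)=0$ is Kraus's divided-difference theorem, e.g.\ \cite[Corollary 2.4.6]{Hi}, rather than \cite[Theorem 2.4]{HP}, which is the one-sided statement on $[0,\alpha)$; and for $\lambda=0$ your kernel is $(x-\alpha)^2$, which is \emph{not} ``a non-negative constant times $(1-\lambda x)^{-1}$ plus an affine function'', so you need the separate classical fact that $x^2$ is operator convex.)

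The one place you and the paper genuinely part ways is your closing remark. The theorem asserts $\exists\mu\,\forall\alpha$, and the paper insists on this (``the measure $\mu$ \dots is independent of the choice of $\alpha$''), whereas you conclude the measure must be allowed to depend on $\alpha$. With the kernel exactly as printed, your reading is the mathematically forced one: your own re-expansion
$$
\frac{x^2}{1-\lambda x}=\bigl(\text{degree-one Taylor polynomial at }\alpha\bigr)
+\frac{(x-\alpha)^2}{(1-\lambda x)(1-\lambda\alpha)^2}
$$
shows that the measure appearing in the printed formula at base point $\alpha$ is $(1-\lambda\alpha)^{-1}\,d\mu$, where $\mu$ is the base-point-$0$ measure; concretely, for $f(x)=x^2/(1-x)$ (so $\mu=\delta_1$) and $\alpha=1/2$, the printed formula with the \emph{same} $\mu$ gives $-1/2$ at $x=0$ instead of $f(0)=0$. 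The resolution is that the denominator of the kernel should carry $(1-\lambda\alpha)^2$, in exact parallel with Theorem \ref{T-2.2}, where it is $(\alpha+\lambda)^2$; with that kernel your displayed identity already yields the stronger, $\alpha$-independent statement with no additional work, and uniqueness of the L\"owner data then makes $\mu$ a canonical datum of $f$ as the paper requires. So rather than weakening the quantifier, you should correct the kernel and keep the $\alpha$-independence. Note that nothing downstream is affected either way: the later constructions only use $\Sigma_f=\supp\mu$, and multiplying $\mu$ by the density $(1-\lambda\alpha)^{-1}$, which is bounded above and below on $[-1,1]$ for fixed $\alpha\in(-1,1)$, does not change the support.
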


\begin{theorem}\label{T-2.2}
A real function $f$ on $(0,\infty)$ is operator convex if and only if it is $C^1$ on
$(0,\infty)$ and there exist a constant $\gamma\ge0$ and a positive measure $\mu$ on
$[0,\infty)$ such that
$$
\int_{[0,\infty)}{1\over(1+\lambda)^3}\,d\mu(\lambda)<+\infty
$$
and, for any choice of $\alpha\in(0,\infty)$,
\begin{align}
f(x)&=f(\alpha)+f'(\alpha)(x-\alpha)+\gamma(x-\alpha)^2 \nonumber\\
&\qquad\qquad\qquad
+\int_{[0,\infty)}{(x-\alpha)^2\over(x+\lambda)(\alpha+\lambda)^2}\,d\mu(\lambda),
\qquad x\in(0,\infty). \label{F-2.1}
\end{align}
\end{theorem}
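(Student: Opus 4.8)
Since Theorem~\ref{T-2.2} is the case $k=2$ of the integral representation for operator $k$-tone functions established in \cite{FHR}, one may simply quote it; here I sketch instead a self-contained reduction to Theorem~\ref{T-2.1}, via the M\"obius map $m(y)=\frac{1+y}{1-y}$, which carries $(-1,1)$ bijectively onto $(0,\infty)$ and $[-1,1]$ onto the one-point compactification $[0,\infty]$, with $m(-1)=0$, $m(0)=1$, $m(1)=\infty$.

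The ``if'' direction is elementary: $(x-\alpha)^2$ is operator convex, and, with $u=x+\lambda>0$, $\frac{(x-\alpha)^2}{x+\lambda}=(x+\lambda)-2(\alpha+\lambda)+\frac{(\alpha+\lambda)^2}{x+\lambda}$ is an affine function plus a non-negative multiple of the operator convex function $1/(x+\lambda)$, so every term on the right-hand side of \eqref{F-2.1} is operator convex on $(0,\infty)$; the hypothesis $\int_{[0,\infty)}(1+\lambda)^{-3}\,d\mu(\lambda)<\infty$ makes the integral and its $x$-derivative converge locally uniformly, whence the right-hand side is a $C^1$ operator convex function. That the representation then holds for every $\alpha\in(0,\infty)$ once it holds for one follows since, for $\alpha,\alpha'\in(0,\infty)$, the difference $\frac{(x-\alpha)^2}{(x+\lambda)(\alpha+\lambda)^2}-\frac{(x-\alpha')^2}{(x+\lambda)(\alpha'+\lambda)^2}$ is affine in $x$ — it is $(x+\lambda)^{-1}$ times the quadratic $\frac{(x-\alpha)^2}{(\alpha+\lambda)^2}-\frac{(x-\alpha')^2}{(\alpha'+\lambda)^2}$, which vanishes at $x=-\lambda$ and hence is divisible by $x+\lambda$ — as is $(x-\alpha)^2-(x-\alpha')^2$; so $\gamma(x-\alpha)^2+\int_{[0,\infty)}\frac{(x-\alpha)^2}{(x+\lambda)(\alpha+\lambda)^2}\,d\mu(\lambda)$ differs by an affine function from the corresponding expression for $\alpha'$, and, each being the second-order part of $f$ at its base point, each must equal $f(x)$ minus the tangent line of $f$ there.

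For the ``only if'' direction, let $f$ be operator convex on $(0,\infty)$ and put $g(y):=(1-y)\,f\bigl(\tfrac{1+y}{1-y}\bigr)$ for $y\in(-1,1)$. Then $g(Y)=(I-Y)^{1/2}f\bigl((I-Y)^{-1/2}(I+Y)(I-Y)^{-1/2}\bigr)(I-Y)^{1/2}$ is the composition of the noncommutative perspective $(A,B)\mapsto A^{1/2}f(A^{-1/2}BA^{-1/2})A^{1/2}$ of $f$ with the affine map $Y\mapsto(I-Y,I+Y)$, and, the perspective of an operator convex function being jointly operator convex (well known; see, e.g., \cite{HP,An}), $g$ is operator convex on $(-1,1)$; conversely $f(x)=\tfrac{x+1}{2}\,g\bigl(\tfrac{x-1}{x+1}\bigr)$ recovers $f$ in the same way. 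Apply Theorem~\ref{T-2.1} to $g$ with base point $\beta:=\tfrac{\alpha-1}{\alpha+1}$: then $g$ is $C^1$ and there is a finite positive measure $\nu$ on $[-1,1]$ representing $g$. Now substitute $y=\tfrac{x-1}{x+1}$, $\beta=\tfrac{\alpha-1}{\alpha+1}$, and, inside the integral, $t=\tfrac{\lambda-1}{\lambda+1}$ (so $t=-1,0,1$ correspond to $\lambda=0,1,\infty$). The identities $y-\beta=\tfrac{2(x-\alpha)}{(x+1)(\alpha+1)}$, $1-ty=\tfrac{2(x+\lambda)}{(\lambda+1)(x+1)}$, $1-t\beta=\tfrac{2(\alpha+\lambda)}{(\lambda+1)(\alpha+1)}$ show, by a direct computation, that the kernel of Theorem~\ref{T-2.1} becomes $\tfrac{(x-\alpha)^2}{(x+\lambda)(\alpha+\lambda)^2}$ multiplied by a factor whose only $x$-dependence is a single $(x+1)^{-1}$, which the outer factor $\tfrac{x+1}{2}$ cancels; the net result is that the kernel of \eqref{F-2.1} is reproduced with the $x$-independent weight $\tfrac14(\lambda+1)^3$. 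Writing $\tilde\nu$ for the pushforward of $\nu$ to $[0,\infty]$ and setting $d\mu(\lambda):=\tfrac14(\lambda+1)^3\,d\tilde\nu(\lambda)$ on $[0,\infty)$ and $\gamma:=\tfrac14\tilde\nu(\{\infty\})\ge0$, the non-affine part of $f$ becomes $\gamma(x-\alpha)^2+\int_{[0,\infty)}\tfrac{(x-\alpha)^2}{(x+\lambda)(\alpha+\lambda)^2}\,d\mu(\lambda)$, with $\int_{[0,\infty)}(1+\lambda)^{-3}\,d\mu(\lambda)=\tfrac14\tilde\nu([0,\infty))<\infty$ automatically; the leftover $\tfrac{x+1}{2}\bigl(g(\beta)+g'(\beta)(y-\beta)\bigr)$ is affine in $x$, so, since the part just produced vanishes to second order at $x=\alpha$ (differentiation under the integral being justified by the integrability condition), it must equal $f(\alpha)+f'(\alpha)(x-\alpha)$. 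This establishes \eqref{F-2.1}.

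The real work is the bookkeeping of this change of variables rather than any conceptual point. Three things need care: the endpoint $t=1$ of $[-1,1]$, which $m$ sends to $\lambda=\infty$, is what produces the \emph{separate} quadratic term $\gamma(x-\alpha)^2$ of \eqref{F-2.1}, which has no analogue in Theorem~\ref{T-2.1} because the compact interval $[-1,1]$ has no ``point at infinity''; the weight $\tfrac14(\lambda+1)^3$ acquired by the measure is exactly what converts the finite-total-mass condition of Theorem~\ref{T-2.1} into the weighted condition $\int(1+\lambda)^{-3}\,d\mu(\lambda)<\infty$; and $\gamma$ and $\mu$ emerge independent of $\alpha$, consistently with the observation (used in the ``if'' part) that a change of base point modifies the non-affine part of $f$ only by an affine summand. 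The one external ingredient beyond Theorem~\ref{T-2.1} is the joint operator convexity of the noncommutative perspective.
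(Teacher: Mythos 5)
The paper itself offers no proof of Theorem~\ref{T-2.2}: it is quoted from \cite[Theorems 4.1, 5.1]{FHR} (the case $k=2$ of the operator $k$-tone representation). Your self-contained reduction to Theorem~\ref{T-2.1} via the Cayley transform $x=\frac{1+y}{1-y}$ and the joint operator convexity of the perspective is therefore necessarily a different route, and its architecture is sound: the ``if'' direction, the affine-difference argument propagating the representation from one base point to all others, the identification of the endpoint $t=1$ with the separate term $\gamma(x-\alpha)^2$, and the conversion of finite total mass into $\int_{[0,\infty)}(1+\lambda)^{-3}\,d\mu(\lambda)<\infty$ are all correct. (One quibble: the joint operator convexity of the perspective is not in \cite{HP} or \cite{An}; it is due to Effros and should be cited as such, or replaced by a direct scalar argument for the transformation $f\mapsto(1-y)f\bigl(\tfrac{1+y}{1-y}\bigr)$.)

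There is, however, one concrete discrepancy you must resolve. Your three substitution identities are correct, but inserting them into the kernel of Theorem~\ref{T-2.1} \emph{as printed}, namely $\frac{(y-\beta)^2}{(1-ty)(1-t\beta)}$, and multiplying by $\frac{x+1}{2}$ yields the weight $\frac{(\lambda+1)^2(\alpha+\lambda)}{2(\alpha+1)}$, not $\frac14(\lambda+1)^3$; your stated weight is what comes out of the kernel $\frac{(y-\beta)^2}{(1-ty)(1-t\beta)^2}$, which carries one extra factor $1-t\beta=\frac{2(\alpha+\lambda)}{(\lambda+1)(\alpha+1)}$. This is not a cosmetic point: the weight produced by the printed kernel depends on $\alpha$, so your construction would give an $\alpha$-dependent $\mu$, contradicting the very statement being proved. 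The root cause is that Theorem~\ref{T-2.1} as displayed cannot be taken at face value together with the paper's assertion that its measure is independent of $\alpha$: for instance $g(y)=y^2/(1-y)$ has second-order remainder $\frac{(y-\beta)^2}{(1-y)(1-\beta)^2}$ at every $\beta$, which the printed kernel represents only with the $\beta$-dependent measure $(1-\beta)^{-1}\delta_1$, whereas the squared kernel represents it with $\delta_1$ for all $\beta$. So your arithmetic silently presupposes the corrected kernel with $(1-t\beta)^2$ (matching the exponent pattern of $(\alpha+\lambda)^2$ in \eqref{F-2.1}); the proof closes once this is stated explicitly, but as written the claim ``the kernel of \eqref{F-2.1} is reproduced with the $x$-independent weight $\tfrac14(\lambda+1)^3$'' is false for Theorem~\ref{T-2.1} as it stands, and a reader checking your computation against the displayed formula will get stuck exactly there.
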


A remarkable point in the above theorems is that the measure $\mu$ as well as the constant
$\gamma$ is independent of the choice of $\alpha$. Furthermore, note that $\mu$ as well as
$\gamma$ is uniquely determined even for a fixed $\alpha$. Indeed, if $f$ is operator
convex on $(0,\infty)$ and $\alpha\in(0,\infty)$, then $h(x):=(f(x)-f(\alpha))/(x-\alpha)$
is operator monotone on $(0,\infty)$ due to the theorem of Kraus \cite{Kr} (also
\cite[Corollary 2.4.6]{Hi}) and the integral expression of Theorem \ref{T-2.2} gives
$$
h(x)=h(\alpha)+\gamma(x-\alpha)
+\int_{[0,\infty)}{x-\alpha\over(x+\lambda)(\alpha+\lambda)}\,d\nu(\lambda),
\qquad x\in(0,\infty),
$$
where $d\nu(\lambda):=(\alpha+\lambda)^{-1}\,d\mu(\lambda)$. As seen in the proof of
\cite[Theorem 1.10]{FHR}, the measure $\nu$ (hence $\mu$) and $\gamma\ge0$ are unique in
this expression of $h$. The discussion is similar for $f$ in Theorem \ref{T-2.1} (see
the proof of \cite[Theorem 1.8]{FHR}). We call the measure $\mu$ in the above theorems
the {\it representing measure} of $f$. The $\mu$ and $\gamma$ are the canonical data of
an operator convex function $f$ on $(0,\infty)$ determining $f$ up to its linear term.

\section{Extreme rays}

In this and the next sections we shall clarify the structures of extreme rays and of
closed facial subcones in the convex cone of non-negative operator convex functions on
$(0,\infty)$. We write $\cF(0,\infty)$ for the locally convex topological space consisting
of all real functions on $(0,\infty)$ with the pointwise convergence topology, and let
$\cF^+(0,\infty):=\{f\in\cF(0,\infty):f\ge0\}$. Let $\cF_\oc^+(0,\infty)$ denote the set
of all operator convex functions $f\ge0$ on $(0,\infty)$ and $\cF_\oc^{++}(0,\infty)$ the
set of all operator convex functions $f>0$ (strictly positive) on $(0,\infty)$, which are
convex cones in $\cF(0,\infty)$. Since operator convexity is preserved under pointwise
convergence, note that $\cF_\oc^+(0,\infty)$ is closed in $\cF(0,\infty)$ while
$\cF_\oc^{++}(0,\infty)$ is not. As easily seen, note also that the pointwise convergence
topology on $\cF_\oc^+(0,\infty)$ is metrizable.

When $C$ is a convex set of a real vector space, a convex subset $F\subset C$ is called
a {\it face} of $C$ if, for any $x,y\in C$, $\lambda x+(1-\lambda)y\in F$ for some
$\lambda\in(0,1)$ implies that $x,y\in F$. When $C$ is a convex cone, recall that a
{\it ray} in $C$ is a set of the form $R=\{\lambda x:0<\lambda<\infty\}$ for some
$x\in C$, $x\not=0$. In this case we say that $R$ is given by $x$. The ray $R$ is called
an {\it extreme ray} if it is a face of $C$, or equivalently, for $y,z\in C$,
$y+z\in R$ implies $y,z\in R$. We fix a few more terminologies in our special situation.
In this paper we simply say that $F$ is a {\it face} of $\cF_\oc^+(0,\infty)$ to mean that
$F$ is a convex subcone of $\cF_\oc^+(0,\infty)$ which is a face of $\cF_\oc^+(0,\infty)$.
Such a face $F$ is always assumed to be non-trivial, i.e., $F\ne\{0\}$. We say also that
a convex subcone $F$ of $\cF_\oc^+(0,\infty)$ is {\it simplicial} if there is a compact
base $F^0$ of $F$ that is a ({\it Choquet}\,) {\it simplex}. Here, a subset $F^0\subset F$
is a {\it base} of $F$ if $F^0$ is convex, $0\not\in F^0$, and for every
$f\in F\setminus\{0\}$ there is a unique $c>0$ such that $c^{-1}f\in F^0$. For definition
of a simplex, see \cite{Ph,simon2011}. A simplex whose extreme points form a compact
subset is called a {\it Bauer simplex}.

The first two theorems of this section are concerned with the extreme rays of
$\cF_\oc^{++}(0,\infty)$ and of $\cF_\oc^+(0,\infty)$, respectively.

\begin{theorem}\label{T-3.1}
The extreme rays of $\cF_\oc^{++}(0,\infty)$ are given by one of the following elements:
\begin{equation}\label{F-3.1}
\begin{cases}
{1\over x+\lambda} & \text{$(0\le\lambda<\infty)$}, \\
\,1, \\
{x^2\over x+\lambda} & \text{$(0\le\lambda<\infty)$}, \\
\,x^2.
\end{cases}
\end{equation}
\end{theorem}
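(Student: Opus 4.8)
The plan is to prove two inclusions: that each function in \eqref{F-3.1} lies in $\cF_\oc^{++}(0,\infty)$ and generates an extreme ray, and that conversely every extreme ray is generated by such a function. Membership is immediate. The engine is Theorem~\ref{T-2.2} together with the \emph{uniqueness} of the canonical data $(\gamma,\mu)$: if $f=f_1+f_2$ with $f,f_1,f_2\in\cF_\oc^+(0,\infty)$ and respective data $(\gamma_i,\mu_i)$, then $\gamma=\gamma_1+\gamma_2$, $\mu=\mu_1+\mu_2$, $f(\alpha)=f_1(\alpha)+f_2(\alpha)$ and $f'(\alpha)=f_1'(\alpha)+f_2'(\alpha)$; conversely, \emph{any} splitting of $(\gamma,\mu)$ into admissible pieces, with any choice of values and derivatives at $\alpha$ summing correctly, yields operator convex summands via \eqref{F-2.1}, so the only genuine constraint in decomposing $f$ inside the cone is that the summands stay strictly positive on $(0,\infty)$. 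Two further ingredients are the division identity
\begin{equation*}
{(x-\alpha)^2\over(x+\lambda)(\alpha+\lambda)^2}={1\over x+\lambda}+{x-2\alpha-\lambda\over(\alpha+\lambda)^2},
\end{equation*}
which shows that a point mass $c\delta_\lambda$ of $\mu$ contributes $c/(x+\lambda)$ modulo an affine term, and the map $J\colon f\mapsto\widetilde f$, $\widetilde f(x):=xf(1/x)$, which is a linear order-preserving involution of $\cF_\oc^{++}(0,\infty)$ (operator convexity of $\widetilde f$ is standard, e.g.\ from joint operator convexity of the perspective $(A,B)\mapsto B^{1/2}f(B^{-1/2}AB^{-1/2})B^{1/2}$ specialized to commuting pairs; strict positivity is clear) with $J(1)=x$, $J(x^2)=1/x$ and $J\bigl(1/(x+\lambda)\bigr)=\lambda^{-1}x^2/(x+\lambda^{-1})$ $(\lambda>0)$, so that $J$ carries the first two families of \eqref{F-3.1} onto the last two.

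Thanks to $J$, for the first inclusion it suffices to show that $1$ and $1/(x+\lambda)$ $(0\le\lambda<\infty)$ generate extreme rays. If $1=f_1+f_2$ in the cone then $\gamma_i=0$ and $\mu_i=0$, so each $f_i$ is affine; a positive affine function on $(0,\infty)$ has nonnegative slope, the two slopes sum to $0$, hence both vanish and $f_i$ is a constant on the ray of $1$. If $1/(x+\lambda)=f_1+f_2$ then $\gamma_i=0$ and $\mu_i=s_i\delta_\lambda$ with $s_1+s_2=1$, so by \eqref{F-2.1} and the identity $f_i=s_i/(x+\lambda)+\ell_i$ with $\ell_i$ affine and $\ell_1+\ell_2=0$; letting $x\to\infty$ forces each $\ell_i$ to have zero slope and then zero constant term, so $f_i$ lies on the ray. (Alternatively $x$ and $x^2$ are handled directly by the same elementary reasoning, with the $f_i$ quadratic.)

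For the converse, let $f$ generate an extreme ray, with data $(\gamma,\mu)$. If $\mu=0$ then $f=\gamma x^2+\beta x+\alpha>0$ on $(0,\infty)$: if $\gamma>0$, subtracting a small $cx^2$ (kept strictly positive via $f(x)/x^2\to\gamma$ and $\inf_{(0,M]}f>0$) forces $f\propto x^2$; if $\gamma=0$, then $\beta,\alpha\ge0$ and extremality gives $f\propto x$ or $f\propto1$. If $\mu\ne0$ and $\gamma>0$: because $f(x)/x^2\to\gamma$ (dominated convergence on the integral in \eqref{F-2.1}, using $(x-\alpha)^2/((x+\lambda)(\alpha+\lambda)^2)\le(x-\alpha)^2/(\alpha+\lambda)^3$ for $x\ge\alpha$) and $f$ is bounded below by a positive constant on each $(0,M]$, the operator convex function $f-cx^2$ (data $(\gamma-c,\mu)$) stays in $\cF_\oc^{++}(0,\infty)$ for small $c>0$ and is not proportional to $f$, contradicting extremality; hence $\gamma=0$. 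In the remaining case $\gamma=0$, $\mu\ne0$, write $f=\ell_0+g_1+g_2$ with $\ell_0$ affine and $g_i(x)=\int(x-\alpha)^2/((x+\lambda)(\alpha+\lambda)^2)\,d\mu_i(\lambda)\ge0$ for a decomposition $\mu=\mu_1+\mu_2$ into nonzero parts carried by disjoint Borel sets. The crux is that $\mu$ must be a point mass: if not, one shows that for a single small $\delta>0$ and a suitable affine function $a$ the function $w:=\delta\bigl(\tfrac12(g_1-g_2)+a\bigr)$ satisfies $f\pm w\in\cF_\oc^{++}(0,\infty)$; since the measure parts of $f\pm w$ are $(1\pm\tfrac\delta2)\mu_1+(1\mp\tfrac\delta2)\mu_2\ge0$ but not multiples of $\mu$, we get $w\notin\bR f$ and the decomposition $f=\tfrac12(f+w)+\tfrac12(f-w)$ contradicts extremality. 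Once $\mu=c\delta_{\lambda_0}$, \eqref{F-2.1} and the identity give $f(x)=c/(x+\lambda_0)+ax+b$ with $a\ge0$ (positivity at $\infty$); either $a=b=0$, giving $f\propto1/(x+\lambda_0)$, or $b<0$, and then for $\lambda_0>0$ the rewriting $1/(x+\lambda_0)=\lambda_0^{-2}\bigl(x^2/(x+\lambda_0)-x+\lambda_0\bigr)$ turns $f$ into $c'x^2/(x+\lambda_0)+a'x+b'$ with $b'=f(0^+)\ge0$, and a short case analysis using extremality forces $a'=b'=0$, i.e.\ $f\propto x^2/(x+\lambda_0)$ (the sub-case $\lambda_0=0$, $b<0$ is ruled out directly by positivity at $0^+$ and $\infty$).

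The principal difficulty is the step that $\mu$ is a point mass when $\gamma=0$: merely splitting $\mu$ into disjoint parts does not keep the summands strictly positive, so the affine perturbation $a$ must be tuned to match the behaviour of $g_1-g_2$ near $0^+$ and its (possibly superlinear) growth at $\infty$ against that of $f$, uniformly on $(0,\infty)$ and for one fixed $\delta$. The concluding identification of $ax+b$, once $\mu$ is a point mass, is elementary but has to track the differing ``affine remainders'' relative to $1/(x+\lambda_0)$ and to $x^2/(x+\lambda_0)$.
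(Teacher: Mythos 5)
Your forward direction (each listed element spans an extreme ray) is sound: the additivity and uniqueness of the canonical data $(\gamma,\mu)$ from Theorem~\ref{T-2.2}, the division identity, and the involution $J\colon f\mapsto xf(x^{-1})$ (the transformation of Remark~\ref{r-3.7}) give a clean argument, and reducing the last two families of \eqref{F-3.1} to the first two via $J$ is legitimate. The converse, however, has a genuine gap at exactly the step you yourself single out as ``the principal difficulty'': when $\gamma=0$ and $\mu$ is not a point mass, you assert that ``one shows'' that $w=\delta\bigl(\tfrac12(g_1-g_2)+a\bigr)$ can be arranged with $f\pm w\in\cF_\oc^{++}(0,\infty)$, but you never produce the affine function $a$ or the estimate keeping $f\pm w$ strictly positive. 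This is not a routine verification: the affine remainder $\ell_0(x)=f(\alpha)+f'(\alpha)(x-\alpha)$ in \eqref{F-2.1} can be negative and unbounded below (for non-increasing $f$ the tangent line tends to $-\infty$), for $f$ with $f(+0)=0$ the bound $|g_1-g_2|\le g_1+g_2=f-\ell_0$ is useless near $0$, and one cannot in general integrate your division identity against $\mu_i$ to peel off an affine part, since $\int(\alpha+\lambda)^{-2}\,d\mu_i(\lambda)$ may diverge under the sole hypothesis $\int(1+\lambda)^{-3}\,d\mu<\infty$. So the central step of the converse is asserted, not proved.

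There is also a concrete error in the endgame. After reducing to $\mu=c\delta_{\lambda_0}$ and $f=c/(x+\lambda_0)+ax+b$, you dismiss the sub-case $\lambda_0=0$, $b<0$ as ``ruled out directly by positivity at $0^+$ and $\infty$''. It is not: $f(x)=1/x+x-1$ lies in $\cF_\oc^{++}(0,\infty)$ (indeed $f\ge1$), has $\gamma=0$, $\mu=\delta_0$, $a=1$, $b=-1$, and your argument as written never shows it is non-extreme. For comparison, the paper takes a different and much shorter route: it first proves Theorem~\ref{T-3.3}, so that on $F_0$ and $F_\infty$ one has the affine-remainder-free representations \eqref{F-3.7} and \eqref{F-3.9} (every nonzero element of these faces is automatically strictly positive, so splitting the measure never leaves the cone), deduces Theorem~\ref{T-3.2}, and then disposes of every $f\in\cF_\oc^{++}(0,\infty)\setminus(F_0\cup F_\infty)$ in one line: such $f$ satisfies $f(x)>f(\alpha_0)>0$ for some $\alpha_0\in[0,\infty)$, and $f=(f-a_0)+a_0$ with $a_0=f(\alpha_0)/2$ exhibits non-extremality. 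Subtracting a small positive \emph{constant} where $\inf f>0$, rather than a tuned affine perturbation of $g_1-g_2$, is precisely the device that closes the case you are stuck on, and it also kills $1/x+x-1$.
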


\begin{theorem}\label{T-3.2}
The extreme rays of $\cF_\oc^+(0,\infty)$ are given by one of the following elements:
\begin{equation}\label{F-3.2}
\begin{cases}
{1\over x+\lambda} & \text{$(0\le\lambda<\infty)$}, \\
\,1, \\
{(x-\alpha)^2\over x+\lambda} & \text{$(0\le\lambda<\infty,\ 0\le\alpha<\infty)$}, \\
(x-\alpha)^2 & \text{$(0\le\alpha<\infty)$}.
\end{cases}
\end{equation}
Furthermore, $\cF_\oc^+(0,\infty)$ is the closed convex cone generated by the above extreme
elements while it is not simplicial.
\end{theorem}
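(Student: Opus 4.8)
\emph{Proof strategy.}
The plan is to extract everything from the integral representation of Theorem~\ref{T-2.2}, using two auxiliary facts: (i) uniqueness in Theorem~\ref{T-2.2} makes the canonical data additive, $\mu_{f+g}=\mu_f+\mu_g$, $\gamma_{f+g}=\gamma_f+\gamma_g$; and (ii) for $0<\alpha<\infty$ the set $F_\alpha$ is a closed face of $\cF_\oc^+(0,\infty)$, and as $f\in F_\alpha$ is nonnegative and $C^1$ with $f(\alpha)=0$ it satisfies $f'(\alpha)=0$, so the representation at base point $\alpha$ gives an order isomorphism $F_\alpha\cong\bR_{\ge0}\times M$ via $f\mapsto(\gamma_f,\mu_f)$, where $M$ is the cone of positive measures on $[0,\infty)$ admissible in Theorem~\ref{T-2.2}; the extreme rays of $\bR_{\ge0}\times M$ are plainly $(1,0)$ and $(0,\delta_\lambda)$, i.e.\ $(x-\alpha)^2$ and $(x-\alpha)^2/(x+\lambda)$. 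The statement then splits into: (a) each function in \eqref{F-3.2} spans an extreme ray; (b) there are no others; (c) $\cF_\oc^+(0,\infty)$ is the closed convex cone they generate; (d) it is not simplicial.

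For (a), suppose one of the listed functions $f$ equals $g+h$ with $g,h\in\cF_\oc^+(0,\infty)$. A short computation shows $\gamma_f$ and $\mu_f$ are supported on a single point or vanish, so by additivity the same holds for $g$ and $h$; hence $g$, $h$ are each of the form $(\text{affine})+c\cdot(\text{the non-affine part of }f)$. The affine part and $c$ are then pinned down by the sandwich $0\le g\le f$, together with $g(\alpha)=g'(\alpha)=0$ when $f$ has an interior zero $\alpha$, forcing $g,h\in\bR_{\ge0}f$. (For $f=1$ the argument is shorter: $g+h=1$ gives $g''\ge0$ and $g''=(1-h)''\le0$, so $g$ is affine, hence constant.) For (b), let $f\ne0$ span an extreme ray. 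If $f(\alpha)=0$ for some $\alpha\in(0,\infty)$, then $f\in F_\alpha$ spans an extreme ray of the face $F_\alpha$, so by the isomorphism above $f$ is a multiple of $(x-\alpha)^2$ or $(x-\alpha)^2/(x+\lambda)$. Otherwise $f>0$ on $(0,\infty)$, so $f\in\cF_\oc^{++}(0,\infty)$; since $\cF_\oc^{++}(0,\infty)\subset\cF_\oc^+(0,\infty)$ the ray is extreme in $\cF_\oc^{++}(0,\infty)$ as well, and Theorem~\ref{T-3.1} identifies $f$ with one of $1/(x+\lambda)$, $1$, $x^2/(x+\lambda)$, $x^2$, all present in \eqref{F-3.2}.

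For (c), one inclusion is immediate. For the converse, representing $f$ at a fixed base point $\alpha_0$ is not enough, because the linear term $f'(\alpha_0)(x-\alpha_0)$ need not be a nonnegative combination of extreme elements; this is the crux. I would get around it by recalling that a convex function on $(0,\infty)$ is non-increasing then non-decreasing, splitting into: (i) $f$ attains its infimum at an interior $x_0$, where $f'(x_0)=0$ and the representation at $x_0$ yields $f=f(x_0)\cdot 1+\gamma(x-x_0)^2+\int\frac{(x-x_0)^2}{x+\lambda}\,d\nu(\lambda)$ with $\nu\ge0$; (ii) $f$ non-decreasing, where letting $\alpha_0\to0^+$ in the representation is legitimate (finiteness of $f'(0^+)$ forces $\mu(\{0\})=0$ and makes the limiting integral converge) and gives $f=f(0^+)\cdot1+f'(0^+)\cdot x+\gamma x^2+\int\frac{x^2}{x+\lambda}\,d\nu(\lambda)$; (iii) $f$ non-increasing, where $\gamma=0$ and integrating $f''$ inward from $\infty$ gives $f=f(\infty)\cdot1+\int\frac{1}{x+\lambda}\,d\mu(\lambda)$. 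In all cases every term is a nonnegative multiple, or a convergent integral with nonnegative density, of elements of \eqref{F-3.2}, and such an integral lies in their closed convex cone (truncate the measure to $[0,N]$, approximate by Riemann sums, let $N\to\infty$). I expect verifying the convergence of the limiting integrals --- i.e.\ the extra integrability hidden in the $C^1$ hypothesis of Theorem~\ref{T-2.2} --- to be the most delicate point.

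For (d), I would show $\cF_\oc^+(0,\infty)$ fails the Riesz decomposition property. Fix $\alpha\in(0,1)$ and put $a=x^2$, $b=1$, $c=(x-\alpha)^2$, $d=2\alpha x+1-\alpha^2$: all lie in $\cF_\oc^+(0,\infty)$ and $a+b=c+d=x^2+1$. If there were $e_{ij}\in\cF_\oc^+(0,\infty)$ with $e_{11}+e_{12}=a$, $e_{21}+e_{22}=b$, $e_{11}+e_{21}=c$, $e_{12}+e_{22}=d$, then $e_{21}+e_{22}=1$ would make $e_{21},e_{22}$ constant (as in (a)); then $e_{11}+e_{21}=(x-\alpha)^2$, which vanishes at $\alpha\in(0,\infty)$, would force $e_{21}=0$; and then $e_{12}=a-e_{11}=x^2-(x-\alpha)^2=2\alpha x-\alpha^2$ is negative for small $x>0$ --- a contradiction. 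Since any convex cone possessing a compact base that is a Choquet simplex has the Riesz decomposition property, $\cF_\oc^+(0,\infty)$ has no such base, i.e.\ it is not simplicial.
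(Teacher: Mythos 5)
Your overall architecture is close to the paper's: both proofs rest on Theorem \ref{T-2.2} and on the facial subcones $F_\alpha$, whose unique integral representations identify the extreme rays inside each face; both reduce a strictly positive $f$ to a face by subtracting a constant; and both deduce non-simpliciality from a failure of unique extremal decomposition. The paper proves Theorem \ref{T-3.3} first (each $F_\alpha$, $0\le\alpha\le\infty$, has a compact base that is a Bauer simplex, using the classical representations \eqref{F-3.7} and \eqref{F-3.9} for $F_\infty$ and $F_0$ and \eqref{F-3.10} for interior $\alpha$), and then Theorem \ref{T-3.2} falls out. Your part (a) (additivity of the canonical data plus the sandwich $0\le g\le f$), your $F_\alpha\cong\bR_{\ge0}\times M$ isomorphism, and your Riesz-decomposition counterexample for (d) are all sound and are legitimate variants; the paper's non-simpliciality witness is simply the identity \eqref{F-3.12}, $x+1/(x+1)=1+x^2/(x+1)$.

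Two points need repair. First, in part (b) you dispose of strictly positive $f$ by citing Theorem \ref{T-3.1}; but in the paper Theorem \ref{T-3.1} is \emph{deduced from} Theorem \ref{T-3.2}, so as written your argument risks circularity. The fix is easy and is exactly what the paper does: if $f>0$ and $f\notin F_0\cup F_\infty$, it has a strictly positive infimum $a_0$ attained at some $\alpha_0\in[0,\infty)$, and $f=(f-a_0)+a_0$ with $f-a_0\in F_{\alpha_0}$ shows $f$ is not extreme; if $f\in F_0$ or $F_\infty$, the simplex structure of that face (your isomorphism, extended to $\alpha=0,\infty$) identifies the extreme rays directly. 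Second, in part (c), case (iii), ``integrating $f''$ inward from $\infty$'' cannot work as stated: for an ordinary convex non-increasing function this produces a representation by the kernels $(t-x)_+$, not by $1/(x+\lambda)$, and nothing in that computation uses operator convexity. You must instead either take the limit $\alpha\to\infty$ in \eqref{F-2.1} (which works: $\gamma$ must vanish, $f'(\alpha)\alpha\to0$ by convexity and boundedness below, and the integrand tends to $1/(x+\lambda)$, but this needs to be carried out), or, as the paper does, invoke the known fact that a non-negative, non-increasing operator convex function is operator monotone decreasing and hence has the representation \eqref{F-3.7}. With these two repairs your proof goes through.
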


The third theorem of this section gives some typical simplicial closed faces of
$\cF_\oc^+(0,\infty)$ while the complete list of those faces will be presented in the next
section.

\begin{theorem}\label{T-3.3}
The convex cone $\cF_\oc^+(0,\infty)$ has the following faces.
\begin{itemize}
\item[(i)] The closed convex subcone generated by $1/(x+\lambda)$ $(0\le\lambda<\infty)$
and $1$ is
$$
F_\infty:=\{f\in\cF_\oc^+(0,\infty):\mbox{$f$ is non-increasing}\},
$$
which becomes a face of $\cF_\oc^+(0,\infty)$.
\item[(ii)] For each fixed $\alpha\in[0,\infty)$ the closed convex subcone generated
by\break $(x-\alpha)^2/(x+\lambda)$ $(0\le\lambda<\infty)$ and $(x-\alpha)^2$ is
\begin{equation}\label{F-3.3}
\qquad F_\alpha:=\{f\in\cF_\oc^+(0,\infty):f(\alpha)=0\}
\quad(\mbox{where $f(\alpha)=f(+0)$ for $\alpha=0$}),
\end{equation}
which becomes a face of $\cF_\oc^+(0,\infty)$.
\item[(iii)] The above closed faces $F_\alpha$ $(0\le\alpha\le\infty)$ are simplicial.
Indeed, each of them is generated by a Bauer simplex; more precisely, each of
\begin{align}
F_\infty^0&:=\{f\in F_\infty:f(1)=1\}, \label{F-3.4}\\
F_\alpha^0&:=\{f\in F_\alpha:f(\alpha+1)=1\}\quad(0\le\alpha<\infty) \label{F-3.5}
\end{align}
is a compact convex subset of $\cF(0,\infty)$ and is a Bauer simplex. (In the above,
$F_\alpha^0$ can also be taken by $\{f\in F_\alpha:f(\xi)=1\}$ for any
$\xi\in(0,\infty)\setminus\{\alpha\}$.)
\end{itemize}
\end{theorem}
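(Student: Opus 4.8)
\emph{Plan of proof.} The strategy is to realize each of $F_\infty$ and $F_\alpha$ ($0\le\alpha<\infty$) as the image of the cone $M_+([0,\infty])$ of finite positive Borel measures on $[0,\infty]$ under an integral transform with an explicit continuous kernel, and to read off (i)--(iii) from this. First the easy assertions. Each listed generator lies in the asserted cone: $1/(x+\lambda)$ and $1$ are non-increasing, non-negative and operator convex, hence in $F_\infty$; and $(x-\alpha)^2/(x+\lambda)$ and $(x-\alpha)^2$ are non-negative, operator convex (positive multiples of extreme elements, by Theorem~\ref{T-3.2}) and vanish at $\alpha$ (at $+0$ when $\alpha=0$), hence in $F_\alpha$. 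Since operator convexity, non-negativity, vanishing at a fixed point (or having limit $0$ at $+0$), and monotonicity all pass to pointwise limits, $F_\infty$ and every $F_\alpha$ are closed convex subcones of $\cF_\oc^+(0,\infty)$, so they contain the closed convex cones generated by the respective generators. They are faces: if $g+h\in F_\alpha$ with $g,h\in\cF_\oc^+(0,\infty)$, then $g(\alpha)+h(\alpha)=0$ (or the analogue at $+0$) forces $g(\alpha)=h(\alpha)=0$; and if $g+h\in F_\infty$, then $0\le g,h\le g+h\le(g+h)(x_0)$ on each $[x_0,\infty)$ shows the convex functions $g,h$ are bounded above, hence non-increasing, hence in $F_\infty$.

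The substance is the reverse inclusions. For $\alpha\in(0,\infty)$ and $f\in F_\alpha$: since $f$ is $C^1$, non-negative and vanishes at the interior point $\alpha$, necessarily $f'(\alpha)=0$; substituting $f(\alpha)=f'(\alpha)=0$ in \eqref{F-2.1} at this $\alpha$ gives $f(x)=\gamma(x-\alpha)^2+\int_{[0,\infty)}\frac{(x-\alpha)^2}{(x+\lambda)(\alpha+\lambda)^2}\,d\mu(\lambda)$, manifestly an integral of the generators, which I rewrite as $\int_{[0,\infty]}\Phi_\alpha(\lambda,x)\,d\widehat\mu(\lambda)$ with $\Phi_\alpha(\lambda,x)=(1+\lambda)(x-\alpha)^2/(x+\lambda)$ for $\lambda<\infty$, $\Phi_\alpha(\infty,x)=(x-\alpha)^2$, and $d\widehat\mu=(1+\lambda)^{-1}(\alpha+\lambda)^{-2}\,d\mu+\gamma\,\delta_\infty$ a finite measure on $[0,\infty]$; a weak-$*$ approximation of $\widehat\mu$ by finitely supported measures then exhibits $f$ as a pointwise limit of finite conic combinations of generators. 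For $\alpha=0$ and $f\in F_0$: since $f(+0)=0$, the Kraus/Hansen--Pedersen characterization gives that $g(x):=f(x)/x$ is operator monotone and $\ge0$ on $(0,\infty)$ (for instance as the pointwise limit of the operator monotone functions $(f(x)-f(\beta))/(x-\beta)$ as $\beta\downarrow0$), so L\"owner's representation of $g$ yields $f(x)=\int_{[0,\infty]}\frac{(1+\lambda)x^2}{x+\lambda}\,d\widehat\mu(\lambda)$, whose value at $\lambda=0$ is $x=x^2/x$ and at $\lambda=\infty$ is $x^2$, again an integral of generators. For $f\in F_\infty$: from \eqref{F-2.1} with $\alpha=1$, boundedness below and the existence of $\lim_{x\to\infty}f'(x)$ force $\gamma=0$, $f'(\infty)=0$, and (by a Fatou argument) finiteness of the measure $d\mu_2:=(1+\lambda)^{-2}\,d\mu$; using $f'(1)=-\mu_2([0,\infty))$ one rewrites $f(x)=f(1)-\int_{[0,\infty)}\frac{(1+\lambda)(x-1)}{x+\lambda}\,d\mu_2(\lambda)$, and letting $x\to\infty$ with $f(\infty)\ge0$ improves this to $\int_{[0,\infty)}(1+\lambda)\,d\mu_2<\infty$; a final rearrangement then gives the Stieltjes form $f(x)=f(\infty)+\int_{[0,\infty)}\frac{d\mu(\lambda)}{x+\lambda}$, and the same weak-$*$ approximation, with kernel $\Phi_\infty(\lambda,x)=(1+\lambda)/(x+\lambda)$, $\Phi_\infty(\infty,x)=1$, finishes the proof of (i) and (ii).

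For (iii) I would package the above into: for each $\sharp\in\{0\le\alpha<\infty\}\cup\{\infty\}$ the map $T_\sharp\colon M_+([0,\infty])\to F_\sharp$, $T_\sharp\widehat\mu=\int_{[0,\infty]}\Phi_\sharp(\lambda,\cdot)\,d\widehat\mu(\lambda)$, is an order-preserving cone isomorphism and a homeomorphism (weak-$*$ on the source, pointwise on the target). Surjectivity and injectivity come from the representations above together with uniqueness of the canonical data $(\gamma,\mu)$ (and of L\"owner measures); continuity of $T_\sharp$ is clear since $\Phi_\sharp(\cdot,x)\in C([0,\infty])$; and continuity of $T_\sharp^{-1}$ follows by restricting to a base and using that a continuous bijection from a compact space onto a Hausdorff space is a homeomorphism. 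For $\xi\in(0,\infty)\setminus\{\alpha\}$ the function $\Phi_\sharp(\cdot,\xi)$ is strictly positive in $C([0,\infty])$, so $f\mapsto f(\xi)$ corresponds to the strictly positive functional $\widehat\mu\mapsto\int\Phi_\sharp(\cdot,\xi)\,d\widehat\mu$; hence $F_\sharp^0=\{f\in F_\sharp:f(\xi)=1\}$ is a base of $F_\sharp$, is weak-$*$ compact (total masses are bounded, $\Phi_\sharp(\cdot,\xi)$ being bounded below), and, via $\widehat\mu\mapsto\Phi_\sharp(\cdot,\xi)\widehat\mu$, is affinely homeomorphic to the set of probability measures on $[0,\infty]$, which (as $[0,\infty]$ is compact metric, with point masses as extreme points) is a Bauer simplex. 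Thus each $F_\sharp^0$ is a compact convex Bauer simplex whose extreme points pull back to the normalized extreme elements of Theorem~\ref{T-3.2}; taking $\xi=\alpha+1$ (finite case) and $\xi=1$ (case $\infty$) yields precisely \eqref{F-3.4}--\eqref{F-3.5}, and for $\alpha=0$ as well as for $\infty$ these choices even make $\Phi_\sharp(\cdot,\xi)\equiv1$, so $F_\sharp^0$ is literally the probability-measure simplex.

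The step I expect to be the main obstacle is the reverse inclusion for $F_\infty$: unlike the case $0\le\alpha<\infty$, where $f(\alpha)=f'(\alpha)=0$ instantly removes the (non-operator-convex) affine part from \eqref{F-2.1}, here one must upgrade the integrability of the representing measure from the a priori $\int(1+\lambda)^{-3}\,d\mu<\infty$ to $\int(1+\lambda)^{-1}\,d\mu<\infty$, and this upgrade genuinely uses non-negativity of $f$ --- for instance $f(x)=a-2\sqrt{x-1}\,\arctan\sqrt{x-1}$ is operator convex and non-increasing but unbounded below, so it does not have the Stieltjes form --- so making the $x\to\infty$ limit deliver this integrability while keeping every interchange of limits and integrals legitimate is the delicate point. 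A lesser technical nuisance is promoting the continuous bijection $T_\sharp$ to a homeomorphism, which is handled by the compact-to-Hausdorff argument noted above.
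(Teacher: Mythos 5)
Your proposal is correct and, for part (ii) and the overall architecture of (iii), follows essentially the same route as the paper: reduce $F_\alpha$ ($0<\alpha<\infty$) to \eqref{F-2.1} with $f(\alpha)=f'(\alpha)=0$, reduce $F_0$ to L\"owner's representation of the operator monotone function $f(x)/x$, and then exploit uniqueness of the representing measure to identify each base with a set of measures on $[0,\infty]$. The genuine divergence is in (i): the paper does not touch Theorem \ref{T-2.2} there at all, but instead quotes the equivalence (from Ando--Hiai and Hansen) between ``non-negative, operator convex and non-increasing'' and ``operator monotone decreasing'', and then cites the known Stieltjes-type representation \eqref{F-3.7} wholesale; you instead derive \eqref{F-3.7} from \eqref{F-2.1} at $\alpha=1$ by forcing $\gamma=0$ and upgrading the integrability of $\mu$ from $\int(1+\lambda)^{-3}\,d\mu<\infty$ to $\int(1+\lambda)^{-1}\,d\mu<\infty$ via two monotone-convergence passages ($f(x)/x\to 0$ and $f(x)\to f(\infty)\ge 0$). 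I checked that the kernels $(x-1)^2/(x(x+\lambda))$ and $(x-1)(1+\lambda)/(x+\lambda)$ are indeed increasing in $x$, so both limit interchanges are legitimate and your argument is self-contained where the paper relies on external citations --- at the price of more computation. A second, smaller difference: for the Bauer-simplex claim the paper invokes uniqueness of the extremal integral decomposition plus Choquet theory on a metrizable compact convex set, whereas you build an explicit affine homeomorphism of $F_\sharp^0$ with the probability measures on $[0,\infty]$ (continuity of $T_\sharp^{-1}$ via compact-to-Hausdorff); your version gives the simplex and the compactness of the extreme boundary in one stroke, and is arguably cleaner. Two small points to tighten: the assertion that ``having limit $0$ at $+0$ passes to pointwise limits'' is false for general functions and needs the secant bound $f_n(x)\le x\,f_n(1)$ on $(0,1)$ coming from convexity (this is exactly how the paper proves closedness of $F_0$); and your face argument for $F_\infty$ (a non-negative convex function bounded above on a half-line has $f'\le 0$ there) is fine and is essentially the paper's observation that $F_\infty=\{f:f'(\infty)=0\}$ with $f'(\infty)\ge0$ additive.
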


To prove the theorems, it is convenient to prove Theorem \ref{T-3.3} first.

\medskip
\noindent{\it Proof of Theorem \ref{T-3.3}.}\enspace
(i)\enspace
It is known \cite[Theorem 3.1]{AH} that a function $f\in\cF^+(0,\infty)$ is operator
monotone decreasing if and only if it is operator convex and non-increasing in the
numerical sense. Therefore, one can write
\begin{equation}\label{F-3.6}
F_\infty=\{f\in\cF^+(0,\infty):\mbox{$f$ is operator monotone decreasing}\}.
\end{equation}
By \cite{Ha} (also \cite[Theorem 3.1]{AH}), $f\in\cF^+(0,\infty)$ belongs to $F_\infty$
if and only if it admits the integral representation
\begin{equation}\label{F-3.7}
f(x)=a+\int_{[0,\infty)}{1+\lambda\over x+\lambda}\,d\nu(\lambda),
\end{equation}
where $a\ge0$ and $\nu$ is a finite positive measure on $[0,\infty)$. This
representation shows that $F_\infty$ is the closed convex cone generated by
$1/(x+\lambda)$ ($0\le\lambda<\infty$) and $1$. Here, the closedness of $F_\infty$
follows since operator monotone decreasingness as well as non-negativity is closed under
the pointwise convergence topology of $\cF(0,\infty)$.

Since $f(1)=0$ implies $f=0$ for any $f\in F_\infty$, we have
$F_\infty=\{cf:f\in F_\infty^0,\,c\ge0\}$, where $F_\infty^0$ is defined in \eqref{F-3.4}.
Note that $f$ in \eqref{F-3.7} belongs to $F_\infty^0$ if and only if
$$
a+\nu([0,\infty))=1.
$$
In this case, for every $x\in(0,1)$ we have
$$
f(x)\le a+{1\over x}\,\nu([0,\infty))\le1+{1\over x},
$$
and $f(x)\le f(1)=1$ for all $x\ge1$. Since $F_\infty^0$ is closed under pointwise
convergence, this implies by Tychonoff's theorem that $F_\infty^0$ is a compact convex
subset of $\cF(0,\infty)$. Moreover, since \eqref{F-3.7} yields the well-known integral
representation of the operator monotone function $f(x^{-1})$ that is unique (see e.g.,
\cite[Theorem 2.7.11]{Hi}), we see that the representation \eqref{F-3.7} is also unique
(more precisely, $a$ and $\nu$ are unique). Therefore, it follows that the extreme points
of $F_\infty^0$ are
$$
\ffi_\lambda(x):={1+\lambda\over x+\lambda}\quad(\lambda\in[0,\infty)),\qquad
\ffi_\infty(x):=1,
$$
so that \eqref{F-3.7} is a unique integral decomposition of $f$ into the extreme points.
Since $F_\infty^0$ is metrizable, it follows from the Choquet theory (see
\cite[Sect.\ 10]{Ph}, \cite[Chap.\ 11]{simon2011}) that $F_\infty^0$ is a simplex.
Moreover, $\{\ffi_\lambda:\lambda\in[0,\infty]\}$ in $\cF(0,\infty)$ is homeomorphic to
the compact set $[0,\infty]$, the one-point compactification of $[0,\infty)$. Hence
$F_\infty^0$ is a Bauer simplex.

If a function $f$ is operator convex, then it is $C^1$ (indeed, even analytic) and
numerically convex. So $f'(x)$ exists and is increasing. Therefore, the limit
$f'(\infty):=\lim_{x\to\infty}f'(x)$ exists in $[0,+\infty]$ for any
$f\in\cF_\oc^+(0,\infty)$. One can then characterize $F_\infty$ as
\[
F_\infty=\{f\in \cF_\oc^+(0,\infty): f'(\infty) =0\}.
\]
This shows that $F_\infty$ is a face of $\cF_\oc^+(0,\infty)$, and so each $\ffi_\lambda$
($0\le\lambda\le\infty$) gives an extreme ray of $\cF_\oc^+(0,\infty)$.

\medskip
(ii)\enspace{\it Case of $\alpha=0$.}\enspace
It is convenient to divide the proof of (ii) into two cases $\alpha=0$ and
$\alpha\in(0,\infty)$. It is obvious that $F_0$ in \eqref{F-3.3} is a face of
$\cF_\oc^+(0,\infty)$. Note (see \cite[Theorem V.2.9]{Bh}) that $f\in\cF^+(0,\infty)$
with $f(+0)=0$ is operator convex on $(0,\infty)$ if and only if $f(x)/x$ is operator
monotone on $(0,\infty)$. Hence $f\in\cF^+(0,\infty)$ belongs to $F_0$ if and only if
$f(x)/x$ admits the integral representation (see \cite[(V.53)]{Bh} or
\cite[Theorem 2.7.11]{Hi})
\begin{equation}\label{F-3.8}
{f(x)\over x}=a+b x+\int_{(0,\infty)}{x(1+\lambda)\over x+\lambda}\,d\mu(\lambda),
\end{equation}
where $a=\lim_{x\searrow0}f(x)/x$, $b=\lim_{x\to\infty}f(x)/x^2$, and $\mu$ is a finite
positive measure on $(0,\infty)$. By extending $\mu$ to a measure on $[0,\infty)$ with
$\mu(\{0\})=a$, \eqref{F-3.8} is rewritten as
\begin{equation}\label{F-3.9}
f(x)=bx^2+\int_{[0,\infty)}{x^2(1+\lambda)\over x+\lambda}\,d\mu(\lambda).
\end{equation}
This shows (ii) in the case $\alpha=0$. Here, the closedness of $F_0$ is seen as follows.
Let $\{f_n\}$ be a sequence in $F_0$ converging to $f\in\cF_\oc^+(0,\infty)$. Choose an
$a>f(1)$; then for every $n$ sufficiently large, $f_n(1)<a$ and so $f_n(x)<ax$ for all
$x\in(0,1)$ by convexity of $f_n$. Hence $f(x)\le ax$ for all $x\in(0,1)$ and $f(+0)=0$
follows.

Since $f(1)=0$ implies $f=0$ for any $f\in F_0$, we have $F_0=\{cf:f\in F_0^0,\,c\ge0\}$,
where $F_0^0$ is defined in \eqref{F-3.5}. Note that $f$ in \eqref{F-3.9} belongs to
$F_0^0$ if and only if
$$
b+\mu([0,\infty))=1.
$$
In this case, for every $x\ge1$ we have
$$
f(x)\le bx^2+x^2\mu([0,\infty))\le2x^2,
$$
and $f(x)\le f(1)=1$ for all $x\in(0,1)$. This implies that $F_0^0$ is a compact convex
subset of $\cF(0,\infty)$. Since the representation \eqref{F-3.9} is unique as so is
\eqref{F-3.8}, it follows that $F_0^0$ is a simplex whose extreme points are
$$
\psi_\lambda(x):={x^2(1+\lambda)\over x+\lambda}\quad(0\le\lambda<\infty),\qquad
\psi_\infty(x):=x^2.
$$
Since $F_0$ is a face of $\cF_\oc^+(0,\infty)$ as mentioned above, it also follows that
each $\psi_\lambda$ ($0\le\lambda\le\infty$) gives an extreme ray of
$\cF_\oc^+(0,\infty)$. Moreover, $\{\psi_\lambda:\lambda\in[0,\infty]\}$ in $\cF(0,\infty)$
is homeomorphic to $[0,\infty]$ so that $F_0^0$ is a Bauer simplex.

\medskip
(ii)\enspace{\it Case of $\alpha\in(0,\infty)$.}\enspace
Let $\alpha$ be arbitrarily fixed in $(0,\infty)$. It is obvious that $F_\alpha$ in
\eqref{F-3.3} is a face of $\cF_\oc^+(0,\infty)$. For each $f\in F_\alpha$, since
$f(\alpha)=f'(\alpha)=0$, the integral representation \eqref{F-2.1} is rewritten as
\begin{equation}\label{F-3.10}
f(x)=\gamma(x-\alpha)^2
+\int_{[0,\infty)}{(x-\alpha)^2(1+\alpha+\lambda)\over x+\lambda}\,d\nu(\lambda),
\end{equation}
where $\gamma\ge0$ and $\nu$ is a finite measure on $[0,\infty)$ defined by
$$
d\nu(\lambda):={1\over(\alpha+\lambda)^2(1+\alpha+\lambda)}\,d\mu(\lambda)
$$
from $\mu$ in Theorem \ref{T-2.2}. Conversely, since $(x-\alpha)^2$ and
$(x-\alpha)^2/(x+\lambda)$ ($0\le\lambda<\infty$) are in $F_\alpha$, $f$ belongs to
$F_\alpha$ whenever it is of the form \eqref{F-3.10}. Hence $f\in\cF^+(0,\infty)$ belongs
to $F_\alpha$ if and only if it admits the integral representation \eqref{F-3.10}. This
shows (ii) in the case $\alpha\in(0,\infty)$.

Since $f(1+\alpha)=0$ implies $f=0$ for any $f\in F_\alpha$, we have
$F_\alpha=\{cf:f\in F_\alpha^0,\,c\ge0\}$, where $F_\alpha^0$ is as in \eqref{F-3.5}.
Note that $f$ in \eqref{F-3.10} belongs $F_\alpha^0$ if and only if
$$
\gamma+\nu([0,\infty))=1.
$$
In this case, for every $x\ge\alpha$ we have
$$
f(x)\le\gamma(x-\alpha)^2+{(x-\alpha)^2(1+\alpha)\over\alpha}\,\nu([0,\infty))
\le{(x-\alpha)^2(1+2\alpha)\over\alpha},
$$
and for every $x\in(0,\alpha)$,
$$
f(x)\le\gamma(x-\alpha)^2+{(x-\alpha)^2(1+\alpha)\over x}\,\nu([0,\infty))
\le{(x-\alpha)^2(x+1+\alpha)\over x}.
$$
This implies that $F_\alpha^0$ is a compact convex subset of $\cF(0,\infty)$. Moreover,
since the representation \eqref{F-2.1} is unique and \eqref{F-3.10} uniquely corresponds
to \eqref{F-2.1}, the representation \eqref{F-3.10} is also unique. Hence $F_\alpha^0$ is
a simplex whose extreme points are
$$
\psi_{\alpha,\lambda}(x):={(x-\alpha)^2(1+\alpha+\lambda)\over x+\lambda}\quad
(0\le\lambda<\infty),\qquad\psi_{\alpha,\infty}(x):=(x-\alpha)^2.
$$
Since $F_\alpha$ is a face of $\cF_\oc^+(0,\infty)$, it follows that each
$\psi_{\alpha,\lambda}$ ($0\le\lambda\le\infty$) gives an extreme ray of
$\cF_\oc^+(0,\infty)$. Since $\{\psi_{\alpha,\lambda}:\lambda\in[0,\infty]\}$ in
$\cF(0,\infty)$ is homeomorphic to $[0,\infty]$, $F_\alpha^0$ is a Bauer simplex.

\medskip
(iii) has been proved in the course of the above proofs of (i) and (ii), and the last
comment in the parentheses is easily verified. For this purpose it might be noted that for
each $\xi\in(0,\infty)\setminus\{\alpha\}$ the set $\{f\in F_\alpha: f(\xi)=1\}$ is a base
of $F_\alpha$, which is a simplex whose extreme points are
\[
\frac{(x-\alpha)^2(\xi+\lambda)}{(\xi-\alpha)^2(x+\lambda)}\quad(0\le \lambda<\infty),
\qquad\frac{(x-\alpha)^2}{(\xi-\alpha)^2}.
\]
\qed

\medskip\noindent
{\it Proof of Theorem \ref{T-3.2}.}\enspace
It has been proved in the course of the proof of Theorem \ref{T-3.3} that the elements in
\eqref{F-3.2} give extreme rays of $\cF_\oc^+(0,\infty)$. If $f\in\cF_\oc^+(0,\infty)$
belongs to one of $F_\alpha$ ($0\le\alpha\le\infty$), then it has also been proved (see
\eqref{F-3.7}, \eqref{F-3.9} and \eqref{F-3.10}) that $f$ is written as an integral of
elements in those extreme rays by a finite positive measure. If
$f\in\cF_\oc^+(0,\infty)$ belongs to none of $F_\alpha$ ($0\le\alpha\le\infty$), then
there exists an $\alpha_0\in[0,\infty)$ such that $f(x)>f(\alpha_0)>0$ for all
$x\in(0,\infty)\setminus\{\alpha_0\}$. Now let $a_0:=f(\alpha_0)$; then
$f-a_0\in F_{\alpha_0}$. Hence by the proof of Theorem \ref{T-3.3}\,(ii) (see \eqref{F-3.9}
and \eqref{F-3.10}), $f$ admits the representation
\begin{equation}\label{F-3.11}
f(x)=a_0+b_0(x-\alpha_0)^2
+\int_{[0,\infty)}{(x-\alpha_0)^2(1+\alpha_0+\lambda)\over x+\lambda}\,d\nu_0(\lambda),
\end{equation}
where $b_0\ge0$ and $\nu_0$ is a finite positive measure on $[0,\infty)$. This shows
that $\cF_\oc^+(0,\infty)$ is the closed convex cone generated by the extreme elements in
\eqref{F-3.2}. Furthermore, the above integral representation implies that there are no
extreme elements other than those in \eqref{F-3.2}. Finally, to show that
$\cF_\oc^+(0,\infty)$ is not simplicial, it is enough to note that
\begin{equation}\label{F-3.12}
x+{1\over x+1}=1+{x^2\over x+1}
\end{equation}
is an element of $\cF_\oc^+(0,\infty)$ that is decomposed into extreme elements in two
different ways.\qed

\medskip\noindent
{\it Proof of Theorem \ref{T-3.1}.}\enspace
The elements in \eqref{F-3.1} are in $\cF_\oc^{++}(0,\infty)$ and give extreme rays of
$\cF_\oc^+(0,\infty)$ by Theorem \ref{T-3.2}. Hence they give extreme rays of
$\cF_\oc^{++}(0,\infty)$. Since they are extreme elements of either $F_0$ or $F_\infty$
that are faces of $\cF_\oc^+(0,\infty)$ (hence faces of
$\cF_\oc^{++}(0,\infty)\cup\{0\}$), it remains to show that any element
$f\in\cF_\oc^{++}(0,\infty)\setminus(F_0\cup F_\infty)$ is not extreme in
$\cF_\oc^{++}(0,\infty)$. For any such $f$, there exists an
$\alpha_0\in[0,\infty)$ such that $f(x)>f(\alpha_0)>0$ for all
$x\in(0,\infty)\setminus\{\alpha_0\}$. Let $a_0:=f(\alpha_0)/2$ and write $f=(f-a_0)+a_0$.
Since $f-a_0,\,a_0\in\cF_\oc^{++}(0,\infty)$ and $f-a_0\ne a_0$, $f$ is not extreme in
$\cF_\oc^{++}(0,\infty)$.\qed

\begin{remark}\label{R-3.5}\rm
Assume that $f\in\cF_\oc^+(0,\infty)\setminus\bigcup_{\alpha\in[0,\infty]}F_\alpha$. Then
in the proof of Theorem \ref{T-3.2}, we gave an extremal decomposition \eqref{F-3.11} for
$f$. We here remark that $f$ admits continuously many different extremal decompositions.
In fact, there is an $\alpha_1\in(0,\infty]$ such that the tangential line of $f$ at
$x=\alpha_1$ passes the origin (take $\alpha_1=\infty$ when a straight line from the
origin is tangential to $f$ at infinity). It is clear that $\alpha_0<\alpha_1$ where
$\alpha_0$ is as given in the proof of Theorem \ref{T-3.2}. Then for every
$\alpha\in[\alpha_0,\alpha_1]$, let $a_\alpha+c_\alpha x$ be the tangential line of $f$
at $x=\alpha$. Then $a_\alpha\ge0$, $c_\alpha\ge0$, and $f-(a_\alpha+c_\alpha x)$ belongs
to $F_\alpha$. Hence $f$ admits an extremal decomposition
\begin{equation}\label{F-3.13}
f(x)=a_\alpha+c_\alpha x+b_\alpha(x-\alpha)^2
+\int_{[0,\infty)}{(x-\alpha)^2(1+\alpha+\lambda)\over x+\lambda}\,d\nu_\alpha(\lambda),
\end{equation}
where $b_\alpha\ge0$ and $\nu_\alpha$ is a positive finite measure on $[0,\infty)$.
For the function \eqref{F-3.12}, $1+x^2/(x+1)$ is the decomposition corresponding to
$\alpha=\alpha_0=0$ and $x+1/(x+1)$ is the decomposition corresponding to
$\alpha=\alpha_1=\infty$ ($x$ is tangential to $x+1/(x+1)$ at infinity). Furthermore,
for each $\alpha\in[0,\infty]$ we have an extremal decomposition
$$
x+{1\over x+1}={2\alpha+1\over(\alpha+1)^2}+{\alpha^2+2\alpha\over(\alpha+1)^2}\,x
+{(x-\alpha)^2\over(\alpha+1)^2(x+1)}
$$
of the form \eqref{F-3.13}. The function
$$
2x^2-2x+1=2(x-\alpha)^2+2(2\alpha-1)x+1-2\alpha^2
$$
gives another example. The above right-hand side expression gives an extremal decomposition
of the form \eqref{F-3.13} for each $\alpha\in[1/2,1/\sqrt2]$. But the above function has
even more different extremal decompositions such as $x^2+(x-1)^2$. In this way, there are
many extremal decompositions for an element of
$\cF_\oc^+(0,\infty)\setminus\bigcup_{\alpha\in[0,\infty]}F_\alpha$.
\end{remark}

\begin{remark}\label{r-3.7}\rm
The transformation $\tau:f\mapsto xf(x^{-1})$ is a linear homeomorphism of $\cF(0,\infty)$
onto itself, which maps $\cF^+(0,\infty)$ onto itself. Since
$\tau(\ffi_\lambda)=\psi_{\lambda^{-1}}$ for every $\lambda\in[0,\infty]$ (where
$\ffi_\alpha,\psi_\alpha$ were defined in the proof of Theorem \ref{T-3.3}), we see that
$\tau$ maps $F_\infty$ onto $F_0$ (and vice versa). This and \eqref{F-3.6} show that the
structure of extreme elements as well as the facial cone structure is completely
equivalent among $F_\infty$, $F_0$ and
$\{f\in\cF^+(0,\infty):\mbox{$f$ is operator monotone}\}$ via the simple isomorphisms
$f\leftrightarrow xf(x^{-1})\leftrightarrow f(x^{-1})$. Incidentally, we see that the
following conditions are equivalent for $f\in\cF^+(0,\infty)$:
\begin{itemize}
\item[(a)] $f$ is operator monotone decreasing (or $f(x^{-1})$ is operator monotone);
\item[(b)] $f$ is operator log-convex in the sense of \cite{AH};
\item[(c)] $f$ is operator convex and non-increasing in the numerical sense, i.e.,
$f\in F_\infty$;
\item[(d)] $xf(x^{-1})$ is operator convex and $\lim_{x\to\infty}f(x)/x=0$, i.e,
$xf(x^{-1})$ is in $F_0$.
\end{itemize}
(indeed, (a) $\Leftrightarrow$ (b) $\Leftrightarrow$ (c) was shown in
\cite[Theorems 2.1 and 3.1]{AH} and (a) $\Leftrightarrow$ (d) is immediately seen from
\cite[Theorem 2.4]{HP}.) It is also obvious that $F_\alpha$ is transformed onto $F_\beta$
by a simple isomorphism $f\mapsto f((\alpha/\beta)x)$ of $\cF(0,\infty)$ for any
$\alpha,\beta\in(0,\infty)$. However, it does not seem that we have such a simple
isomorphism between $F_0$ and $F_\alpha$, $0<\alpha<\infty$.
\end{remark}

\section{Facial subcones}

In the previous section we proved that the extreme rays of $\cF_\oc^+(0,\infty)$ are given
by the elements $g_{\alpha,\lambda}$ ($\alpha,\lambda\in[0,\infty]$) defined as
\begin{equation}\label{F-4.1}
g_{\alpha,\lambda}(x):=\begin{cases}    
{(x-\alpha)^2\over x+\lambda} & \text{($0\le\alpha<\infty$, $0\le\lambda<\infty$)}, \\
(x-\alpha)^2 & \text{($0\le\alpha<\infty$, $\lambda=\infty$)}, \\                                                                                                          
1\over x+\lambda & \text{($\alpha=\infty$, $0\le\lambda<\infty$)}, \\
\,1 & \text{($\alpha=\infty$, $\lambda=\infty$)}.
\end{cases}
\end{equation}
Furthermore, $\cF_\oc^+(0,\infty)$ is the closed convex cone generated by the above extreme
elements. For each $\alpha\in[0,\infty]$ and each nonempty closed subset $\Lambda$ of
$[0,\infty]$, we denote by $F_{\alpha,\Lambda}$ the closed convex cone generated by
$\{g_{\alpha,\lambda}:\lambda\in\Lambda\}$. Moreover, for each closed subset $\Lambda$
(possibly $\emptyset$) of $[0,\infty]$, we denote by $E_\Lambda$ the closed convex cone
generated by $\{g_{\alpha,\lambda}:\alpha\in[0,\infty],\,\lambda\in\Lambda\}$ and
$\{g_{\infty,\infty},g_{0,0}\}=\{1,x\}$. In particular,
\begin{equation}\label{F-4.2}
F_{\infty,\{\infty\}}=\{\lambda 1:\lambda\ge0\},\qquad
F_{0,\{0\}}=\{\lambda x:\lambda\ge0\}.
\end{equation}
The closed faces $F_\alpha$ ($0\le\alpha\le\infty$) introduced in Theorem \ref{T-3.3}
coincide with $F_{\alpha,[0,\infty]}$ so that
\begin{align*}
F_{\alpha,[0,\infty]}
&=F_\alpha=\{f\in\cF_\oc^+(0,\infty):f(\alpha)=0\}\quad(0\le\alpha<\infty), \\
F_{\infty,[0,\infty]}
&=F_\infty=\{f\in\cF_\oc^+(0,\infty):\mbox{$f$ is non-increasing}\}.
\end{align*}

Note that the extremal integral decomposition of an element of $\cF_\oc^+(0,\infty)$ is not
unique since $\cF_\oc^+(0,\infty)$ is not a simplicial convex cone (Theorem \ref{T-3.2}).
But Theorem \ref{T-2.2} gives a standard integral representation for a general operator
convex function on $(0,\infty)$ (hence for $f\in\cF_\oc^+(0,\infty)$). By taking $\alpha=1$
we rewrite the representation as follows:
$$
f(x)=f(1)+f'(1)(x-1)+\gamma(x-1)^2
+\int_{[0,\infty)}{(x-1)^2(2+\lambda)\over x+\lambda}\,d\nu(\lambda),
$$
where $\gamma\ge0$ and $\nu$ is a finite positive measure on $[0,\infty)$ transformed from
the representing measure $\mu$ of $f$ by
$$
d\nu(\lambda):={1\over(1+\lambda)^2(2+\lambda)}\,d\mu(\lambda).
$$
Furthermore, by extending $\nu$ to a measure on $[0,\infty]$ with $\nu(\{\infty\})=\gamma$,
one can write the above representation as
\begin{equation}\label{F-4.3}
f(x)=f(1)+f'(1)(x-1)+\int_{[0,\infty]}{(x-1)^2(2+\lambda)\over x+\lambda}\,d\nu(\lambda),
\end{equation}
where $(x-1)^2(2+\lambda)/(x+\lambda)$ means $(x-1)^2$ when $\lambda=\infty$. Now, for
every $f\in\cF_\oc^+(0,\infty)$ let $\supp\nu$ be the support of $\nu$, and we denote it
by $\Sigma_f$. From the above construction, $\Sigma_f$ is given in terms of the canonical
data $\mu$ and $\gamma$ of $f$ as follows:
\begin{equation}\label{F-4.4}
\Sigma_f=\begin{cases}
\supp\mu & \text {if $\gamma=0$}, \\
\supp\mu\cup\{\infty\} & \text{if $\gamma>0$}.
\end{cases}
\end{equation}
Moreover, for every closed subset $\Lambda$ of $[0,\infty]$ we define
$$
\cF_\Lambda:=\{f\in\cF_\oc^+(0,\infty):\Sigma_f\subset\Lambda\}.
$$
In particular, since $\Sigma_f=\emptyset$ if and only if $f(x)=p+qx$ with $p,q\ge0$, we
have
\begin{equation}\label{F-4.5}
\cF_\emptyset=\{p+qx:p,q\ge0\}.
\end{equation}

The aim of this section is to prove the next theorem determining all closed faces of
$\cF_\oc^+(0,\infty)$.

\begin{theorem}\label{T-4.1}
The family of all closed faces ($\ne\{0\}$) of $\cF_\oc^+(0,\infty)$ is given by
\begin{itemize}
\item[(i)] $F_{\alpha,\Lambda}$ for $\alpha\in[0,\infty]$ and for nonempty closed subsets
$\Lambda$ of $[0,\infty]$,
\item[(ii)] $E_\Lambda$ for closed subsets $\Lambda$ of $[0,\infty]$.
\end{itemize}

Furthermore, the family of maximal closed faces of $\cF_\oc^+(0,\infty)$ is given by
$F_\alpha$ for $\alpha\in[0,\infty]$, where a closed face $F$ of $\cF_\oc^+(0,\infty)$ is
said to be maximal if $F\ne\cF_\oc^+(0,\infty)$ and there is no closed face $\widetilde F$
of $\cF_\oc^+(0,\infty)$ such that
$F\subsetneqq\widetilde F\subsetneqq\cF_\oc^+(0,\infty)$.
\end{theorem}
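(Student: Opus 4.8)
The plan is to prove Theorem~\ref{T-4.1} in three stages: (1)~every member of families (i), (ii) is a closed face; (2)~there are no others; (3)~the maximality statement, which will then drop out formally from (1)--(2).

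\emph{Stage (1).} Each $F_{\alpha,\Lambda}$ is the closed subcone of $F_\alpha$ generated by the closed subset $\{g_{\alpha,\lambda}:\lambda\in\Lambda\}$ (equivalently $\{\psi_{\alpha,\lambda}:\lambda\in\Lambda\}$, resp.\ $\{\ffi_\lambda:\lambda\in\Lambda\}$) of the extreme boundary of the Bauer simplex cone $F_\alpha$ of Theorem~\ref{T-3.3}; since closed faces of a cone with a compact Choquet–simplex base are exactly the closed subcones generated by closed subsets of the extreme boundary, $F_{\alpha,\Lambda}$ is a closed face of $F_\alpha$, hence of $\cF_\oc^+(0,\infty)$ because $F_\alpha$ is itself a face. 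For $E_\Lambda$ I would identify it with $\cF_\Lambda$: the inclusion $E_\Lambda\subseteq\cF_\Lambda$ needs $\cF_\Lambda$ to be closed (below), and then follows since each generator $g_{\beta,\lambda}$ with $\lambda\in\Lambda$ has $\Sigma_{g_{\beta,\lambda}}\subseteq\{\lambda\}$ while $\Sigma_1=\Sigma_x=\emptyset$; for the reverse I would write $f\in\cF_\Lambda$ through \eqref{F-3.7}, \eqref{F-3.9}, \eqref{F-3.10} or \eqref{F-3.11} (according to whether $f\in F_\infty$, $F_0$, $F_\alpha$ with $0<\alpha<\infty$, or lies outside all $F_\alpha$), recognise the integral part as a barycenter of the compact family $\{\psi_{\alpha,\lambda}:\lambda\in\Lambda\}$ (resp.\ $\{\ffi_\lambda:\lambda\in\Lambda\}$), hence an element of the closed cone generated by $\{g_{\alpha,\lambda}:\lambda\in\Lambda\}$, the linear pieces being nonnegative multiples of $1=g_{\infty,\infty}$ and $x=g_{0,0}$. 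That $\cF_\Lambda$ is a \emph{face} follows from the uniqueness clause of Theorem~\ref{T-2.2}: the canonical data $(\gamma,\mu)$, hence the measure $\nu$ of \eqref{F-4.3}, is additive and positively homogeneous, so $\Sigma_{f+g}=\Sigma_f\cup\Sigma_g$ by \eqref{F-4.4}, which makes $\cF_\Lambda$ a subcone with the face property. That $\cF_\Lambda$ is \emph{closed} is the one point needing care: if $f_n\to f$ pointwise with $f_n\in\cF_\Lambda$, then $f_n'(1)\to f'(1)$ (convexity), so the total masses $\nu_{f_n}([0,\infty])=f_n(2)-f_n(1)-f_n'(1)$ (the integrand in \eqref{F-4.3} equals $1$ at $x=2$) are bounded, any weak-$*$ limit point $\rho$ of $\{\nu_{f_n}\}$ is supported in the closed set $\Lambda$ and satisfies $\int\frac{(x-1)^2(2+\lambda)}{x+\lambda}\,d\rho(\lambda)=\int\frac{(x-1)^2(2+\lambda)}{x+\lambda}\,d\nu_f(\lambda)$ for every $x$, and since $\{\lambda\mapsto 1/(x+\lambda):x>0\}\cup\{1\}$ spans a point-separating subalgebra of $C([0,\infty])$ (resolvent identity), $\rho=\nu_f$ and $f\in\cF_\Lambda$.

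\emph{Stage (2).} Let $F\ne\{0\}$ be a closed face. If $F\subseteq F_\alpha$ for some $\alpha\in[0,\infty]$, then $F$ is a closed face of the Bauer simplex cone $F_\alpha$, so $F=F_{\alpha,\Lambda}$ for a nonempty closed $\Lambda$. Assume now $F\not\subseteq F_\alpha$ for every $\alpha$; I claim $F=E_{\Lambda_F}$ with $\Lambda_F:=\overline{\bigcup_{f\in F}\Sigma_f}$. Here $F\subseteq\cF_{\Lambda_F}=E_{\Lambda_F}$ is immediate, so the work is the reverse inclusion. First, some $h\in F$ lies outside $\bigcup_\alpha F_\alpha$: a nonzero element of $F$ lies in at most one $F_\alpha$ (a nonnegative operator convex function vanishing at two interior points vanishes identically, by analyticity), and if every element of $F$ lay in some $F_\alpha$ the labels could not all coincide ($F\ne\{0\}$, $F\not\subseteq F_\alpha$), so with $g_1\in F\cap F_{\alpha_1}$, $g_2\in F\cap F_{\alpha_2}$, $\alpha_1\ne\alpha_2$, one checks $g_1+g_2$ lies in no $F_\gamma$ (finite $\gamma$ would force $g_1(\gamma)=g_2(\gamma)=0$; $\gamma=\infty$ fails since one summand is eventually strictly increasing), a contradiction. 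Applying \eqref{F-3.11} to such $h$ and using the face property splits off $a_0\cdot1$ with $a_0=\min h>0$, so $1\in F$; applying the homeomorphism $\tau(f)(x)=xf(x^{-1})$ of Remark~\ref{r-3.7}, which maps $\cF_\oc^+(0,\infty)$ onto itself (it permutes the extreme rays in \eqref{F-4.1} via $g_{\alpha,\lambda}\mapsto g_{1/\alpha,1/\lambda}$, and $\cF_\oc^+(0,\infty)$ is the closed cone they generate by Theorem~\ref{T-3.2}) and hence sends each $F_\alpha$ to $F_{1/\alpha}$, shows $\tau(F)$ is again a closed face of the present type, so $1\in\tau(F)$, i.e.\ $x=\tau(1)\in F$. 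Next, for every $f\in F$ and $\lambda_0\in\Sigma_f$ a shrinking-neighbourhood argument (split the representing measure of $f$ over a ball about $\lambda_0$; the pieces are again in $\cF_\oc^+(0,\infty)$, hence in $F$ by the face property; normalise and let the ball shrink, using closedness of $F$) yields an extreme ray $g_{\alpha_0,\lambda_0}\in F$ for some $\alpha_0$. The elementary identities $g_{\beta,\lambda}-g_{\alpha_0,\lambda}=2(\alpha_0-\beta)\cdot1-(\alpha_0-\beta)(\alpha_0+\beta+2\lambda)\,g_{\infty,\lambda}$ (for $\lambda<\infty$), $g_{\beta,\infty}-g_{\alpha_0,\infty}=2(\alpha_0-\beta)\,x-(\alpha_0-\beta)(\alpha_0+\beta)\cdot1$, and $g_{0,\lambda}=x-\lambda\cdot1+\lambda^2 g_{\infty,\lambda}$ then let one transfer membership in $F$ from $g_{\alpha_0,\lambda_0}$ and $1,x\in F$ first to $g_{\infty,\lambda_0}$ and then to $g_{\beta,\lambda_0}$ for every $\beta\in[0,\infty]$, each identity being read — after moving nonnegative multiples of already-known members of $F$ to one side — as the display of an element of $F$ as a sum of elements of $\cF_\oc^+(0,\infty)$, whence the face property puts every summand in $F$. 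A pointwise-limit argument (rescaling $g_{\beta,\lambda_k}$ by $1+\beta+\lambda_k$ when $\lambda_k\to\infty$, and letting $\lambda_k\to\lambda$ in general) extends this to all $\lambda\in\Lambda_F$. Thus $F$ contains every generator of $E_{\Lambda_F}$, so $F\supseteq E_{\Lambda_F}$ and $F=E_{\Lambda_F}$.

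\emph{Stage (3).} Given the list, maximality is formal: $F_\alpha=F_{\alpha,[0,\infty]}$ is proper, and if $F_\alpha\subsetneq\widetilde F\subsetneq\cF_\oc^+(0,\infty)$ then $\widetilde F$ is some $F_{\beta,\Lambda}$ — forcing $\alpha=\beta$ (as $F_\alpha\subseteq F_\beta$ only for $\alpha=\beta$) and then $\Lambda=[0,\infty]$, i.e.\ $\widetilde F=F_\alpha$ — or some $E_\Lambda$, forcing $\Lambda=[0,\infty]$, i.e.\ $\widetilde F=\cF_\oc^+(0,\infty)$; either way a contradiction, so $F_\alpha$ is maximal. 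Conversely $F_{\beta,\Lambda}\subsetneq F_\beta$ when $\Lambda\ne[0,\infty]$, and for $\Lambda\ne[0,\infty]$ one has $E_\Lambda\subsetneq E_{\Lambda'}\subsetneq\cF_\oc^+(0,\infty)$ for a suitable closed $\Lambda\subsetneq\Lambda'\subsetneq[0,\infty]$, so nothing else is maximal. I expect the main obstacle to be the bookkeeping in the non-contained case of Stage~(2): the four representation theorems give supports and base-points that behave differently at the ends of $[0,\infty]$ (for instance $0\notin\Sigma_f$ when $f\in F_0$, and $\infty\notin\Sigma_f$ when $f\in F_\infty$), so the extraction of extreme rays and their propagation by the identities above require a careful case analysis according to whether $\alpha_0$ or $\lambda_0$ equals $0$ or $\infty$; establishing the weak-$*$-closedness of $\cF_\Lambda$ cleanly is the other step that must be handled with some care.
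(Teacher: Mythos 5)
Your proposal follows essentially the same route as the paper: your Stage (1) is the content of Lemmas \ref{L-4.3}--\ref{L-4.6} ($\cF_\Lambda$ is a closed face via additivity of the canonical data, weak-$*$ compactness and the resolvent-identity/Stone--Weierstrass argument; $F_{\alpha,\Lambda}=F_\alpha\cap\cF_\Lambda$; $E_\Lambda=\cF_\Lambda$); Stage (2) mirrors the paper's dichotomy $F\subseteq F_\alpha$ versus not, with the same device of splitting off an affine minorant to get $1,x\in F$ (the paper gets both $1$ and $x$ at once from $f_0\ge p+qx$ with $p,q>0$, without invoking $\tau$) and then propagating extreme rays across $\alpha$ by affine identities --- your three displayed identities are an algebraic repackaging of the paper's tangent-line subtraction \eqref{F-4.11}--\eqref{F-4.12}; Stage (3) is the paper's injectivity argument.

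One step is stated incorrectly and needs repair: in the extraction of $g_{\alpha_0,\lambda_0}\in F$ from $\lambda_0\in\Sigma_f$, the two pieces obtained by restricting the representing measure to a ball $B$ and to its complement do \emph{not} sum to $f$ --- the affine term $f(1)+f'(1)(x-1)$ of \eqref{F-4.3} is left over, and whichever piece carries it need not be non-negative (for $f=g_{2,\lambda}$ the leftover is the tangent line at $x=1$, which is negative at $x=2$), so the face property cannot be applied to the splitting as written. Since you already have $1,x\in F$ at that point, the fix is to apply the face property instead to
$f+M(1+x)=\bigl[f(1)+f'(1)(x-1)+M(1+x)+\int_{B^c}\bigr]+\int_B$
with $M\ge\max\{f'(1)-f(1),\,-f'(1)\}$ so that the bracketed summand is a non-negative operator convex function; then $\int_B\in F$, and normalising by $\nu(B)>0$ and shrinking $B$ gives $(2+\lambda_0)g_{1,\lambda_0}\in F$ by closedness of $F$. (The paper avoids this issue altogether: it shows $F$ has a compact base and applies Krein--Milman, so the required elements $g_{\alpha,\lambda}\in F$ arise directly as extreme points of that base.) With that repair, and the case analysis at $\alpha_0,\lambda_0\in\{0,\infty\}$ that you already flag, your argument goes through.
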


\begin{remark}\label{R-4.2}\rm
Under the transformation $\tau:f\mapsto xf(x^{-1})$ on $\cF^+(0,\infty)$ mentioned in
Remark \ref{r-3.7}, the closed faces in Theorem \ref{T-4.1} are related as follows:
$$
\tau(F_{\alpha,\Lambda})=F_{\alpha^{-1},\Lambda^{-1}},\qquad
\tau(E_\Lambda)=E_{\Lambda^{-1}}.
$$
In particular, $\tau(F_{0,\{0\}})=F_{\infty,\{\infty\}}$ for those in \eqref{F-4.2},
and $\tau(E_\emptyset)=E_\emptyset$ for $E_\emptyset=\cF_\emptyset$ in \eqref{F-4.5}.
\end{remark}

\begin{lemma}\label{L-4.3}
For any closed subset $\Lambda$ (possibly $\emptyset$) of $[0,\infty]$, $\cF_\Lambda$ is
a closed face of $\cF_\oc^+(0,\infty)$.
\end{lemma}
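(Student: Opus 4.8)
The plan is to prove two things: first, that $\cF_\Lambda$ is a convex subcone closed in the pointwise topology, and second, that it is a face. For the cone and convexity properties, the natural tool is the standard representation \eqref{F-4.3}: an element $f\in\cF_\oc^+(0,\infty)$ lies in $\cF_\Lambda$ exactly when its measure $\nu$ in \eqref{F-4.3} is supported in $\Lambda$. Since the data $(f(1),f'(1),\nu)$ depend linearly on $f$ (uniqueness of $\mu,\gamma$ in Theorem \ref{T-2.2} is what makes this well-defined), multiplying $f$ by a positive scalar scales $\nu$ by the same scalar, and adding two elements adds the corresponding $\nu$'s; in both operations the support stays inside $\Lambda$, so $\cF_\Lambda$ is a convex subcone.

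For closedness, I would argue that if $f_n\to f$ pointwise with each $f_n\in\cF_\Lambda$, then $f\in\cF_\Lambda$. The first step is to recover $f(1)$, $f'(1)$ and $\nu_n$ from pointwise limits: $f_n(1)\to f(1)$ is immediate, and one can extract $f'(1)$ and control the total mass $\nu_n([0,\infty])$ from values of $f_n$ at finitely many points (using convexity bounds as in the proof of Theorem \ref{T-3.3}, e.g. evaluating at $1/2$, $1$, $2$ bounds $\nu_n([0,\infty])$ uniformly). Once the masses are uniformly bounded, pass to a weak-* convergent subsequence $\nu_n\to\nu$ as measures on the compact space $[0,\infty]$; weak-* limits of measures supported in the closed set $\Lambda$ are again supported in $\Lambda$. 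Since for each fixed $x\in(0,\infty)$ the kernel $\lambda\mapsto (x-1)^2(2+\lambda)/(x+\lambda)$ is continuous on $[0,\infty]$, one gets $\int_{[0,\infty]}(x-1)^2(2+\lambda)/(x+\lambda)\,d\nu_n(\lambda)\to\int_{[0,\infty]}(x-1)^2(2+\lambda)/(x+\lambda)\,d\nu(\lambda)$, and combined with the convergence of the linear parts this forces the limit measure in the representation of $f$ to be exactly $\nu$, hence $\Sigma_f=\supp\nu\subset\Lambda$ and $f\in\cF_\Lambda$.

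The remaining and most delicate point is that $\cF_\Lambda$ is a face: if $f=g+h$ with $g,h\in\cF_\oc^+(0,\infty)$ and $f\in\cF_\Lambda$, then $g,h\in\cF_\Lambda$. Write $f,g,h$ via \eqref{F-4.3} with measures $\nu_f,\nu_g,\nu_h$. By additivity of the canonical data, $\nu_f=\nu_g+\nu_h$ as finite positive measures on $[0,\infty]$. Now $\nu_g,\nu_h\ge0$ and their sum is supported in $\Lambda$, so each is supported in $\Lambda$ as well (a positive measure dominated by one concentrated on a set is concentrated on that set). Hence $\Sigma_g,\Sigma_h\subset\Lambda$ and both lie in $\cF_\Lambda$. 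The only subtlety to double-check here is the additivity $\nu_{g+h}=\nu_g+\nu_h$, which I would deduce from the uniqueness clause following Theorem \ref{T-2.2}: the pair $(\mu,\gamma)$ is uniquely determined by $f$, and the representation \eqref{F-2.1} is manifestly additive in $f$ when $\alpha$ is fixed (the functions $f\mapsto f(\alpha)$, $f\mapsto f'(\alpha)$ are linear), so $(\mu_{g+h},\gamma_{g+h})=(\mu_g+\mu_h,\gamma_g+\gamma_h)$, and the passage to $\nu$ via $d\nu(\lambda)=(1+\lambda)^{-2}(2+\lambda)^{-1}d\mu(\lambda)$ together with $\nu(\{\infty\})=\gamma$ preserves this. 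I expect the mass-bound argument needed for closedness (getting a uniform bound on $\nu_n([0,\infty])$ from pointwise data) to be the one place requiring a small but genuine estimate; everything else is linear algebra of the representing measures plus a routine weak-* compactness argument.
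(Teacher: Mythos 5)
Your proposal is correct and follows essentially the same route as the paper: additivity of the canonical data $(f(1),f'(1),\nu)$ for the cone and face properties, and for closedness the recovery of $f'(1)$ from convexity difference quotients, a uniform mass bound, weak-$*$ compactness on $[0,\infty]$, and lower semicontinuity of $\mu\mapsto\mu([0,\infty]\setminus\Lambda)$. The only cosmetic difference is that you identify the weak-$*$ limit with the representing measure of $f$ by citing the uniqueness of the representation \eqref{F-4.3}, whereas the paper proves that identification directly via a Stone--Weierstrass density argument for the kernels $\lambda\mapsto 1/(x+\lambda)$; both are legitimate since that uniqueness is established after Theorem \ref{T-2.2}.
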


\begin{proof}
Let $f_1,f_2$ be in $\cF_\oc^+(0,\infty)$, having the representations as in \eqref{F-4.3}
with measures $\nu_1,\nu_2$, respectively. Then $f_1+f_2$ has the representation with
measure $\nu_1+\nu_2$. If $f_1,f_2\in\cF_\Lambda$, then
$\supp\nu_i\subset\Lambda$
for $i=1,2$ so that $\supp(\nu_1+\nu_2)\subset\Lambda$ and hence $f_1+f_2\in\cF_\Lambda$. 
So $\cF_\Lambda$ is a convex cone. If $f_1+f_2\in\cF_\Lambda$, then
$\supp(\nu_1+\nu_2)\subset\Lambda$ so that $\supp\nu_i\subset\Lambda$, $i=1,2$, and hence
$f_1,f_2\in\cF_\Lambda$. So $\cF_\Lambda$ is a face of $\cF_\oc^+(0,\infty)$. It remains
to prove that $\cF_\Lambda$ is closed (in the pointwise topology on $\cF_\oc^+(0,\infty)$).
Recall that $\cF_\oc^+(0,\infty)$ is metrizable, and let $\{f_n\}$ be a sequence in
$\cF_\Lambda$ such that $f_n\to f\in\cF_\oc^+(0,\infty)$ in the pointwise topology. So,
each $f_n$ has the representation \eqref{F-4.3} with measure $\nu_n$, and $f$ does so with
$\nu$. We have $\supp\nu_n\subset\Lambda$ for all $n$. For any $\eps>0$ one can choose a
$\delta\in(0,1)$ such that
$$
f'(1)-\eps<{f(1)-f(1-\delta)\over\delta}
\le{f(1+\delta)-f(1)\over\delta}<f'(1)+\eps.
$$
Hence, if $n$ is sufficiently large, then
$$
f'(1)-\eps<{f_n(1)-f_n(1-\delta)\over\delta}
\le{f_n(1+\delta)-f_n(1)\over\delta}<f'(1)+\eps
$$
so that $f'(1)-\eps<f_n'(1)<f'(1)+\eps$. This shows that $f_n'(1)\to f'(1)$ as
$n\to\infty$. Hence
\begin{equation}\label{F-4.6}
\int_{[0,\infty]}{2+\lambda\over x+\lambda}\,d\nu(\lambda)
=\lim_{n\to\infty}\int_{[0,\infty]}{2+\lambda\over x+\lambda}\,d\nu_n(\lambda),
\qquad x\in(0,\infty)\setminus\{1\},
\end{equation}
where $(2+\lambda)/(x+\lambda)=1$ for $\lambda=\infty$. Moreover, since
$$
f_n(2)=f_n(1)+f_n'(1)+\nu_n([0,\infty])\longrightarrow
f(2)=f(1)+f'(1)+\nu([0,\infty]),
$$
we have $\nu([0,\infty])=\lim_{n\to\infty}\nu_n([0,\infty])$. We may consider $\nu_n$ and
$\nu$ as elements of the dual Banach space $C([0,\infty])^*$ of the Banach space
$C([0,\infty])$ with sup-norm. Since the weak* topology on a norm-bounded subset of
$C([0,\infty])^*$ is metrizable, one can choose, by Alaoglu's theorem, a subsequence
$\{\nu_{n(k)}\}$ converging to a $\nu_0\in C([0,\infty])^*$ (identified with a finite
positive Borel measure on $[0,\infty]$) as $k\to\infty$ in the weak* topology. For each
$x\in(0,\infty)$, since $\lambda\mapsto(2+\lambda)/(x+\lambda)$ is in $C([0,\infty])$,
this weak* convergence and \eqref{F-4.6} imply that
$$
\int_{[0,\infty]}{2+\lambda\over x+\lambda}\,d\nu(\lambda)
=\int_{[0,\infty]}{2+\lambda\over x+\lambda}\,d\nu_0(\lambda),
\qquad x\in(0,\infty)\setminus\{1\},
$$
and in particular when $x=2$, we have $\nu([0,\infty])=\nu_0([0,\infty])$. Moreover, since
$(2+\lambda)/(x+\lambda)=1+(2-x)/(x+\lambda)$, we have
$$
\int_{[0,\infty]}{1\over x+\lambda}\,d\nu(\lambda)
=\int_{[0,\infty]}{1\over x+\lambda}\,d\nu_0(\lambda),
\qquad x\in(0,\infty)\setminus\{1,2\}.
$$
In fact, this holds for $x=1,2$ as well by taking limits. Noting that
$$
{1\over x_1+\lambda}\cdot{1\over x_2+\lambda}={1\over x_2-x_1}
\biggl({1\over x_1+\lambda}-{1\over x_2+\lambda}\biggr),
\qquad x_1,x_2\in(0,\infty),\ x_1\ne x_2,
$$
we see by the Stone-Weierstrass theorem that the linear span of $1$ and
$\lambda\mapsto1/(x+\lambda)$ for $x\in(0,\infty)$ is a norm-dense subalgebra of
$C([0,\infty])$. Therefore, $\nu=\nu_0$. Here, $\nu_n([0,\infty]\setminus\Lambda)=0$ for
all $n$. Since $\mu\mapsto\mu([0,\infty]\setminus\Lambda)$ is lower semicontinuous in the
weak* topology on the set of finite positive Borel measures on $[0,\infty]$, it follows
that $\nu([0,\infty]\setminus\Lambda)=0$ and so $f\in\cF_\Lambda$. Hence $\cF_\Lambda$ is
closed.
\end{proof}

In the following lemmas we clarify the structure of $F_{\alpha,\Lambda}$ and $E_\Lambda$
in Theorem \ref{T-4.1} in terms of $\cF_\Lambda$, which will be also useful to prove the
theorem.

\begin{lemma}\label{L-4.4}
Let $\alpha\in(0,\infty)$ and $\Lambda$ be a nonempty closed subset of $[0,\infty]$.
\begin{itemize}
\item[(1)] For every $\lambda\in[0,\infty]$, $g_{\alpha,\lambda}\in\cF_\Lambda$ if and
only if $\lambda\in\Lambda$.
\item[(2)] $F_{\alpha,\Lambda}=F_\alpha\cap\cF_\Lambda$ and hence $F_{\alpha,\Lambda}$ is
a closed face of $\cF_\oc^+(0,\infty)$.
\item[(3)] The correspondence $\Lambda\mapsto F_{\alpha,\Lambda}$ is one-to-one for
nonempty closed subsets $\Lambda$ of $[0,\infty]$.
\end{itemize}
\end{lemma}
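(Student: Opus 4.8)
The plan is to prove the three parts of Lemma~\ref{L-4.4} in order, using the explicit integral representation \eqref{F-4.3} together with the description \eqref{F-4.4} of the support $\Sigma_f$ in terms of the canonical data. Throughout I fix $\alpha\in(0,\infty)$ and a nonempty closed $\Lambda\subset[0,\infty]$.

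\medskip
For part (1), I would simply compute the canonical data of $g_{\alpha,\lambda}$. Since $g_{\alpha,\lambda}(x)=(x-\alpha)^2/(x+\lambda)$ vanishes to second order at $x=\alpha$, it lies in $F_\alpha$, and writing it in the form \eqref{F-3.10} with $\alpha$ as base point shows its representing measure is a point mass at $\lambda$ when $\lambda<\infty$ (and $\gamma>0$, $\mu=0$ when $\lambda=\infty$, corresponding to $g_{\alpha,\infty}=(x-\alpha)^2$). Translating through the normalization in \eqref{F-4.3}, the measure $\nu$ attached to $g_{\alpha,\lambda}$ is a positive multiple of the Dirac mass $\delta_\lambda$ on $[0,\infty]$, so $\Sigma_{g_{\alpha,\lambda}}=\{\lambda\}$. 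Hence $g_{\alpha,\lambda}\in\cF_\Lambda$ iff $\{\lambda\}\subset\Lambda$, i.e.\ iff $\lambda\in\Lambda$. One small care point: the base point in \eqref{F-4.3} is $1$, not $\alpha$, but since $\mu$ and $\gamma$ are intrinsic to $f$ (independent of base point, as stressed after Theorem~\ref{T-2.2}), this causes no trouble.

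\medskip
For part (2), the inclusion $F_{\alpha,\Lambda}\subset F_\alpha\cap\cF_\Lambda$ is immediate: by part (1) every generator $g_{\alpha,\lambda}$, $\lambda\in\Lambda$, lies in $F_\alpha\cap\cF_\Lambda$, and $F_\alpha$ is closed (Theorem~\ref{T-3.3}) while $\cF_\Lambda$ is closed (Lemma~\ref{L-4.3}), so their intersection is a closed convex cone containing all generators, hence contains the closed convex cone they generate. For the reverse inclusion, take $f\in F_\alpha\cap\cF_\Lambda$. Because $f\in F_\alpha$, the representation \eqref{F-3.10} applies: $f=\gamma(x-\alpha)^2+\int_{[0,\infty)}\frac{(x-\alpha)^2(1+\alpha+\lambda)}{x+\lambda}\,d\nu(\lambda)$ with $\gamma\ge0$ and $\nu$ finite positive. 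Extending $\nu$ to $[0,\infty]$ by $\nu(\{\infty\})=\gamma$, this exhibits $f$ as an integral of the generators $g_{\alpha,\lambda}$ (up to the positive continuous weight $1+\alpha+\lambda$, finite on all of $[0,\infty]$ after the usual convention at $\infty$) against a finite positive measure. The condition $f\in\cF_\Lambda$ says $\Sigma_f\subset\Lambda$; by \eqref{F-4.4} and the relation between this $\nu$ and the canonical $\mu,\gamma$ (the weight is strictly positive, so supports match), the extended $\nu$ is supported in $\Lambda$. A standard approximation of this integral by Riemann-type finite sums of generators $g_{\alpha,\lambda}$ with $\lambda\in\supp\nu\subset\Lambda$, convergent pointwise on $(0,\infty)$, then shows $f$ lies in the closed convex cone generated by $\{g_{\alpha,\lambda}:\lambda\in\Lambda\}$, i.e.\ $f\in F_{\alpha,\Lambda}$. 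Finally, $F_{\alpha,\Lambda}=F_\alpha\cap\cF_\Lambda$ is an intersection of two closed faces of $\cF_\oc^+(0,\infty)$, hence a closed face.

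\medskip
Part (3) follows from part (1): if $F_{\alpha,\Lambda_1}=F_{\alpha,\Lambda_2}$ then for each $\lambda$, $g_{\alpha,\lambda}\in F_{\alpha,\Lambda_1}=F_\alpha\cap\cF_{\Lambda_1}$ iff $g_{\alpha,\lambda}\in F_\alpha\cap\cF_{\Lambda_2}$, which by (1) (all $g_{\alpha,\lambda}$ being in $F_\alpha$ automatically) says $\lambda\in\Lambda_1$ iff $\lambda\in\Lambda_2$; hence $\Lambda_1=\Lambda_2$. The step I expect to be the main obstacle is the reverse inclusion in part (2): one must make precise the passage from the integral representation \eqref{F-3.10} of an $f\in F_\alpha$ with $\Sigma_f\subset\Lambda$ to membership in the \emph{closed} cone generated by the generators with parameters in $\Lambda$ — that is, the measure-to-closed-cone approximation argument, including handling the point $\lambda=\infty$ and checking that the support of the measure in \eqref{F-3.10}, the canonical support $\Sigma_f$, and $\Lambda$ all line up correctly under the various reparametrizations. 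Everything else is bookkeeping with the integral formulas already established.
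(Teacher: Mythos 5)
Your proposal is correct, and parts (1) and (3) coincide with the paper's argument (for (1) the paper even records your route --- computing the canonical data $\mu,\gamma$ of $g_{\alpha,\lambda}$ and invoking base-point independence plus \eqref{F-4.4} --- as the ``immediate'' alternative to its explicit change-of-base-point computation). Where you genuinely diverge is the reverse inclusion $F_\alpha\cap\cF_\Lambda\subset F_{\alpha,\Lambda}$ in part (2). The paper forms the compact base $K=\{f\in F_\alpha\cap\cF_\Lambda:f(\alpha+1)=1\}$, applies Krein--Milman, uses that $K$ is a face of $\{f\in\cF_\oc^+(0,\infty):f(\alpha+1)=1\}$ so that its extreme points lie on extreme rays of $\cF_\oc^+(0,\infty)$, and then identifies $\ex K$ via Theorem \ref{T-3.2} and part (1). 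You instead go directly through the representation \eqref{F-3.10}: match the support of the measure there with $\Sigma_f\subset\Lambda$ (the density $(\alpha+\lambda)^{-2}(1+\alpha+\lambda)^{-1}$ is strictly positive, so supports agree, with the mass $\gamma$ at $\infty$ handled by the extension of $\nu$ to $[0,\infty]$), and approximate the integral by finite non-negative combinations of $\psi_{\alpha,\lambda_i}$ with $\lambda_i\in\supp\nu\subset\Lambda$; since $\lambda\mapsto\psi_{\alpha,\lambda}(x)$ is continuous on the compact set $[0,\infty]$ for each fixed $x$, the weak* approximation of $\nu$ by atomic measures supported in $\supp\nu$ gives pointwise convergence, hence membership in the closed cone. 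This is a sound and more constructive argument: it avoids both the Krein--Milman theorem and the full classification of extreme rays (Theorem \ref{T-3.2}), at the cost of having to carry out the measure-to-cone approximation explicitly --- the one step you leave as ``standard,'' which indeed it is. The paper's route buys brevity here because the compactness of $F_\alpha^0$ and the extreme-ray classification were already in hand; note only that your phrase about the weight $1+\alpha+\lambda$ being ``finite at $\infty$'' should really say that the full integrand $(x-\alpha)^2(1+\alpha+\lambda)/(x+\lambda)$ extends continuously to $(x-\alpha)^2$ at $\lambda=\infty$, which is what the approximation actually uses.
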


\begin{proof}
(1)\enspace
Note that
\begin{align*}
{(x-\alpha)^2\over x+\lambda}&={(1-\alpha)^2\over1+\lambda}
+{(1-\alpha)(1+\alpha+2\lambda)\over(1+\lambda)^2}\,(x-1)
+\biggl({\alpha+\lambda\over1+\lambda}\biggr)^2{(x-1)^2\over x+\lambda}, \\
(x-\alpha)^2&=(\alpha-1)^2-2(\alpha-1)(x-1)+(x-1)^2.
\end{align*}
From these expressions and the uniqueness of the representation \eqref{F-4.3} we have
$g_{\alpha,\lambda}\in\cF_{\{\lambda\}}$ for every $\lambda\in[0,\infty]$. (This is also
immediate from Theorem \ref{T-2.2} and \eqref{F-4.4}.) The assertion is then obvious.

(2)\enspace
Since both $F_\alpha$ and $\cF_\Lambda$ are closed faces of  $\cF_\oc^+(0,\infty)$ due to
Theorem \ref{T-3.3} and Lemma \ref{L-4.3}, $F_\alpha\cap\cF_\Lambda$ is also a closed face
of $\cF_\oc^+(0,\infty)$. By definition it is obvious that
$F_{\alpha,\Lambda}\subset F_\alpha\cap\cF_\Lambda$. To prove the converse, define
$$
K:=\{f\in F_\alpha\cap\cF_\Lambda:f(\alpha+1)=1\},
$$
which is a compact convex subset of $\cF(0,\infty)$ (see Theorem \ref{T-3.3} and its
proof). The Krein-Milman theorem says that $K=\overline{\conv}(\ex K)$, the closed convex
hull of the set $\ex K$ of all extreme points of $K$. Since $K$ becomes a face of
$\{f\in\cF_\oc^+(0,\infty):f(\alpha+1)=1\}$, every extreme point of $K$ belongs to an
extreme ray of $\cF_\oc^+(0,\infty)$. Hence by Theorem \ref{T-3.2} and (1) above we have
$$
\ex K=\biggl\{{1\over g_{\alpha,\lambda}(\alpha+1)}\,g_{\alpha,\lambda}:
\lambda\in\Lambda\biggr\}.
$$
This shows that $F_\alpha\cap\cF_\Lambda$ is generated by
$\{g_{\alpha,\lambda}:\lambda\in\Lambda\}$, so
$F_\alpha\cap\cF_\Lambda\subset F_{\alpha,\Lambda}$.

(3) immediately follows from (1) and (2).
\end{proof}

\begin{lemma}\label{L-4.5}
Let $\alpha=0$ or $\infty$. For any nonempty closed subset $\Lambda$ of $[0,\infty]$,
$F_{\alpha,\Lambda}$ is a closed face of $\cF_\oc^+(0,\infty)$, and furthermore
$\Lambda\mapsto F_{\alpha,\Lambda}$ is one-to-one.
\end{lemma}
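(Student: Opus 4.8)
The plan is to treat the cases $\alpha=\infty$ and $\alpha=0$ and to mimic the argument of Lemma~\ref{L-4.4}, using the simplicial structure of $F_\infty$ and $F_0$ already established in Theorem~\ref{T-3.3}. First consider $\alpha=\infty$. For each $\lambda\in[0,\infty]$ the generator $g_{\infty,\lambda}$ equals $1/(x+\lambda)$ (or $1$ when $\lambda=\infty$), and by Theorem~\ref{T-2.2} together with \eqref{F-4.4} one reads off directly that the representing measure of $g_{\infty,\lambda}$ (relative to the base-point $\alpha=1$ in \eqref{F-4.3}) is a point mass at $\lambda$; hence $g_{\infty,\lambda}\in\cF_{\{\mu\}}$ iff $\mu=\lambda$, which gives the analogue of Lemma~\ref{L-4.4}(1). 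I would then show $F_{\infty,\Lambda}=F_\infty\cap\cF_\Lambda$: the inclusion $\subset$ is immediate from the definitions and this last observation, and since both $F_\infty$ (Theorem~\ref{T-3.3}) and $\cF_\Lambda$ (Lemma~\ref{L-4.3}) are closed faces of $\cF_\oc^+(0,\infty)$, the intersection is a closed face. For the reverse inclusion I would pass to the compact convex base $K:=\{f\in F_\infty\cap\cF_\Lambda:f(1)=1\}$ (compact by Theorem~\ref{T-3.3}), apply Krein--Milman, note that $K$ is a face of $\{f\in\cF_\oc^+(0,\infty):f(1)=1\}$ so each of its extreme points lies on an extreme ray of $\cF_\oc^+(0,\infty)$, and invoke Theorem~\ref{T-3.2} plus the point-mass computation to conclude $\ex K=\{\ffi_\lambda:\lambda\in\Lambda\}$ in the notation of the proof of Theorem~\ref{T-3.3}. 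This exhibits $F_\infty\cap\cF_\Lambda$ as the closed cone generated by $\{g_{\infty,\lambda}:\lambda\in\Lambda\}$, i.e.\ $F_{\infty,\Lambda}$.

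For $\alpha=0$ the argument is identical with $F_0$ in place of $F_\infty$: the generator $g_{0,\lambda}$ is $x^2/(x+\lambda)$ (or $x^2$), a short computation as in Lemma~\ref{L-4.4}(1) rewrites it in the form \eqref{F-4.3} with representing measure a point mass at $\lambda$ (alternatively use $f(x)=x^2(1+\lambda)/(x+\lambda)$ up to scaling and apply \eqref{F-3.9}), so $g_{0,\lambda}\in\cF_{\{\mu\}}$ iff $\mu=\lambda$; then $F_{0,\Lambda}=F_0\cap\cF_\Lambda$ by the same Krein--Milman/extreme-ray argument, now using the base $\{f\in F_0\cap\cF_\Lambda:f(1)=1\}$ and the extreme points $\psi_\lambda$ from the proof of Theorem~\ref{T-3.3}(ii). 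In both cases $F_{\alpha,\Lambda}$ is a closed face as an intersection of two closed faces.

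Injectivity of $\Lambda\mapsto F_{\alpha,\Lambda}$ then follows exactly as in Lemma~\ref{L-4.4}(3): if $\lambda\in\Lambda$ then $g_{\alpha,\lambda}\in F_{\alpha,\Lambda}$, while if $\lambda\notin\Lambda$ then $g_{\alpha,\lambda}\notin\cF_\Lambda\supset F_{\alpha,\Lambda}$; hence $\Lambda=\{\lambda\in[0,\infty]:g_{\alpha,\lambda}\in F_{\alpha,\Lambda}\}$ is recovered from $F_{\alpha,\Lambda}$. I do not expect a genuine obstacle here; the only point needing minor care is that the base-point $1$ used to define $\cF_\Lambda$ via \eqref{F-4.3} is admissible for normalizing elements of $F_\infty$ and $F_0$ (it is, since $f(1)=0$ forces $f=0$ in either cone, as noted in the proof of Theorem~\ref{T-3.3}), so that the compact bases used in the Krein--Milman step coincide with those of Theorem~\ref{T-3.3}. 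The mild bookkeeping of matching the two normalizations (base-point $1$ versus $\alpha+1$) is the closest thing to a subtlety.
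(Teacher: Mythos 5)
Your plan hinges on two claims for $\alpha\in\{0,\infty\}$: that $g_{\alpha,\lambda}\in\cF_{\{\mu\}}$ iff $\mu=\lambda$, and that $F_{\alpha,\Lambda}=F_\alpha\cap\cF_\Lambda$. Both fail at one exceptional value of $\lambda$, and this is precisely the reason the paper treats $\alpha=0,\infty$ in a separate lemma rather than folding them into Lemma \ref{L-4.4}. The point is that $g_{\infty,\infty}=1$ and $g_{0,0}=x$ are affine, so their representing measure in \eqref{F-4.3} is the \emph{zero} measure and $\Sigma_f=\emptyset$ by \eqref{F-4.4}--\eqref{F-4.5}; hence $1$ and $x$ belong to $\cF_\Lambda$ for \emph{every} closed $\Lambda$, not only when $\infty\in\Lambda$ (resp.\ $0\in\Lambda$). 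Concretely, take $\alpha=\infty$ and any nonempty closed $\Lambda\subset[0,\infty]$ with $\infty\notin\Lambda$: then $1\in F_\infty\cap\cF_\Lambda$ but $1\notin F_{\infty,\Lambda}$ (which, as the paper's proof shows, consists of functions of the form \eqref{F-4.8} with measure supported in $\Lambda$, all of which tend to $0$ at infinity). So $F_{\infty,\Lambda}\subsetneqq F_\infty\cap\cF_\Lambda$; your Krein--Milman step would in fact yield $\ex K=\{\ffi_\lambda:\lambda\in\Lambda\cup\{\infty\}\}$ and identify $F_\infty\cap\cF_\Lambda$ with $F_{\infty,\Lambda\cup\{\infty\}}$. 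The same defect appears in your injectivity argument: for $\lambda=\infty\notin\Lambda$ you cannot conclude $g_{\infty,\infty}\notin F_{\infty,\Lambda}$ from $g_{\infty,\infty}\notin\cF_\Lambda$, since the latter is false. The case $\alpha=0$ fails symmetrically at $\lambda=0$ because $g_{0,0}=x$.

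The paper's proof repairs exactly this: instead of the base-point-$1$ representation \eqref{F-4.3}, it uses the intrinsic integral representations of $F_0$ and $F_\infty$, namely \eqref{F-3.9} rewritten as \eqref{F-4.7} and \eqref{F-3.7} rewritten as \eqref{F-4.8}, and defines $\cF_\Lambda^{(0)}$ (resp.\ $\cF_\Lambda^{(\infty)}$) as the set of functions whose representing measure \emph{in that representation} is supported in $\Lambda$. In these representations the point masses at $0$ and at $\infty$ are genuinely detected ($x$ corresponds to $\mu=\delta_0$ in \eqref{F-4.7} and $1$ to $\nu=\delta_\infty$ in \eqref{F-4.8}), so the analogues of Lemma \ref{L-4.4}(1)--(3) go through verbatim, with your Krein--Milman argument otherwise intact. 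Your proposal as written is not correct; it needs this substitution of $\cF_\Lambda^{(0)}$, $\cF_\Lambda^{(\infty)}$ for $\cF_\Lambda$.
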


\begin{proof}
To prove the lemma, it is convenient to use the integral representations \eqref{F-3.7} for
$\alpha=\infty$ and \eqref{F-3.9} for $\alpha=0$ instead of \eqref{F-4.3}.

First, assume that $\alpha=0$. For $f\in F_0$ we rewrite \eqref{F-3.9} as
\begin{equation}\label{F-4.7}
f(x)=\int_{[0,\infty]}{x^2(1+\lambda)\over x+\lambda}\,d\mu(\lambda),
\end{equation}
where $\mu$ is a unique finite positive measure on $[0,\infty]$ and
$x^2(1+\lambda)/(x+\lambda)$ means $x^2$ for $\lambda=\infty$. For every nonempty closed
$\Lambda\subset[0,\infty]$ we define $\cF_\Lambda^{(0)}$ to be the set of
$f\in\cF_\oc^+(0,\infty)$ whose representing measure $\mu$ in \eqref{F-4.7} is supported
in $\Lambda$. Then, similarly to Lemma \ref{L-4.3} it is seen that $\cF_\Lambda^{(0)}$ is
a closed face of $\cF_\oc^+(0,\infty)$. Moreover, the statements of Lemma \ref{L-4.4}
can be verified in this case as well when $\cF_\Lambda$ is replaced with
$\cF_\Lambda^{(0)}$. The injectivity of $\Lambda\mapsto F_{0,\Lambda}$ is also seen since
$g_{0,\lambda}\in\cF_\Lambda^{(0)}$ if and only if $\lambda\in\Lambda$.

Secondly, assume that $\alpha=\infty$. For $f\in F_\infty$ we rewrite \eqref{F-3.7} as
\begin{equation}\label{F-4.8}
f(x)=\int_{[0,\infty]}{1+\lambda\over x+\lambda}\,d\nu(\lambda),
\end{equation}
where $\nu$ is a unique finite positive measure on $[0,\infty]$ and
$(1+\lambda)/(x+\lambda)$ means $1$ for $\lambda=\infty$. Then the assertion can be shown
similarly to the case $\alpha=0$ above by defining $\cF_\Lambda^{(\infty)}$ to be the set
of $f\in\cF_\oc^+(0,\infty)$ whose representing measure $\nu$ in \eqref{F-4.8} is supported
in $\Lambda$.
\end{proof}

\begin{lemma}\label{L-4.6}
For every closed subset $\Lambda$ of $[0,\infty]$, $E_\Lambda=\cF_\Lambda$ and hence
$E_\Lambda$ is a closed face of $\cF_\oc^+(0,\infty)$. Furthermore,
$\Lambda\mapsto E_\Lambda$ is one-to-one.
\end{lemma}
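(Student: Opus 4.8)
The plan is to prove the two assertions of Lemma \ref{L-4.6} separately: first the set equality $E_\Lambda=\cF_\Lambda$, and then the injectivity of $\Lambda\mapsto E_\Lambda$.

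For the set equality, one inclusion is easy. Each generator of $E_\Lambda$ lies in $\cF_\Lambda$: for $g_{\alpha,\lambda}$ with $\lambda\in\Lambda$ this is the computation already recorded in the proof of Lemma \ref{L-4.4}(1) (showing $g_{\alpha,\lambda}\in\cF_{\{\lambda\}}$), together with the observation that $g_{\infty,\infty}=1$ and $g_{0,0}=x$ both have $\Sigma_f=\emptyset\subset\Lambda$ by \eqref{F-4.5}. Since $\cF_\Lambda$ is a closed convex cone by Lemma \ref{L-4.3}, it contains the closed convex cone generated by these elements, i.e.\ $E_\Lambda\subset\cF_\Lambda$. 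For the reverse inclusion I would take $f\in\cF_\Lambda$ and use its canonical representation \eqref{F-4.3}, i.e.
\[
f(x)=f(1)+f'(1)(x-1)+\int_{[0,\infty]}{(x-1)^2(2+\lambda)\over x+\lambda}\,d\nu(\lambda),
\]
where $\supp\nu\subset\Lambda$. Writing the linear part as $f(1)+f'(1)(x-1)=(f(1)-f'(1))\cdot 1+f'(1)\cdot x$, and recalling that for $f\in\cF_\oc^+(0,\infty)$ one has $f'(1)\ge 0$ and also $f(1)-f'(1)=f(1)-f'(1)\cdot 1\ge 0$ (this is the value at $x=0$ of the tangent line at $x=1$, which is $\le f(+0)$ hence $\ge 0$ by convexity and non-negativity), the linear term is a non-negative combination of $1=g_{\infty,\infty}$ and $x=g_{0,0}$. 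The integral term is, by construction, a limit of Riemann-type sums of the generators $(x-1)^2(2+\lambda)/(x+\lambda)=(2+\lambda)g_{1,\lambda}(x)$ with $\lambda\in\supp\nu\subset\Lambda$, hence lies in the closed convex cone generated by $\{g_{1,\lambda}:\lambda\in\Lambda\}\subset\{g_{\alpha,\lambda}:\alpha\in[0,\infty],\lambda\in\Lambda\}$; one must check this approximation is in the pointwise topology, which follows since the integrand, for each fixed $x$, is a bounded continuous function of $\lambda$ on the compact set $[0,\infty]$, so the integral is a genuine limit of convex combinations of point masses. Adding the two parts gives $f\in E_\Lambda$.

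For the injectivity of $\Lambda\mapsto E_\Lambda$, I would show that $\Lambda$ is recoverable from $E_\Lambda=\cF_\Lambda$. The cleanest route: for $\lambda\in[0,\infty]$ we have $g_{1,\lambda}\in\cF_\Lambda$ if and only if $\lambda\in\Lambda$ — the ``if'' direction is the inclusion just proved ($g_{1,\lambda}\in\cF_{\{\lambda\}}\subset\cF_\Lambda$), and the ``only if'' direction holds because $\Sigma_{g_{1,\lambda}}=\{\lambda\}$ by \eqref{F-4.4} (the canonical data of $g_{1,\lambda}$ has representing measure the point mass at $\lambda$ when $\lambda<\infty$, and $\gamma>0$, $\mu=0$ when $\lambda=\infty$). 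Hence $\Lambda=\{\lambda\in[0,\infty]:g_{1,\lambda}\in\cF_\Lambda\}$ is determined by the cone $\cF_\Lambda=E_\Lambda$, giving injectivity. Finally, $E_\Lambda=\cF_\Lambda$ is a closed face by Lemma \ref{L-4.3}, completing the proof.

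I do not anticipate a serious obstacle here, as the lemma is essentially a repackaging of Theorem \ref{T-2.2} and \eqref{F-4.4}; the one point requiring a little care is the topological approximation argument for the integral term (passing from the integral representation \eqref{F-4.3} to membership in the \emph{closed} convex cone generated by the $g_{\alpha,\lambda}$), where one should invoke that measures supported on a compact metric space are weak* limits of finitely supported measures and that the relevant test function $\lambda\mapsto(2+\lambda)(x+\lambda)^{-1}$ is continuous on $[0,\infty]$ for each fixed $x$, so that pointwise (in $x$) convergence of the integrals follows.
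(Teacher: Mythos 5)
Your easy inclusion $E_\Lambda\subset\cF_\Lambda$ and your injectivity argument are essentially the paper's (modulo the small point that Lemma \ref{L-4.4}\,(1) is stated only for $\alpha\in(0,\infty)$, so for $\alpha=0,\infty$ one also needs the explicit expansions \eqref{F-4.9} and \eqref{F-4.10}). But the reverse inclusion $\cF_\Lambda\subset E_\Lambda$ contains a genuine gap: both inequalities you invoke for the linear term are false. For $f(x)=1/(x+\lambda)\in\cF_\oc^+(0,\infty)$ one has $f'(1)=-1/(1+\lambda)^2<0$, and for $f(x)=x^2$ one has $f(1)-f'(1)=1-2=-1<0$. (Your justification of the second inequality --- that the value at $x=0$ of the tangent line at $x=1$ is $\le f(+0)$, ``hence $\ge0$'' --- is a non sequitur: convexity puts the tangent line \emph{below} the graph, which gives an upper bound on $f(1)-f'(1)$, not a lower bound.) So the linear part $f(1)+f'(1)(x-1)$ of \eqref{F-4.3} is in general not a non-negative combination of $1$ and $x$; it need not even be non-negative on $(0,\infty)$. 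Your treatment of the integral term alone is fine (it does lie in the closed cone generated by $\{g_{1,\lambda}:\lambda\in\Lambda\}$), but the splitting of \eqref{F-4.3} into ``linear part plus integral part'' is not a decomposition inside the cone $\cF_\oc^+(0,\infty)$, so it does not show $f\in E_\Lambda$.

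This is precisely the difficulty the lemma is really about: the canonical representation \eqref{F-4.3} is \emph{not} an extremal decomposition of $f$ within $\cF_\oc^+(0,\infty)$ (cf.\ Remark \ref{R-3.5}); to turn it into one, the negative contribution to the slope must be absorbed into the integrand by trading $g_{1,\lambda}$ for suitable $g_{\alpha,\lambda}$'s, and there is no canonical way to do this. The paper sidesteps the issue: it takes the compact base $K=\{f\in\cF_\Lambda:f(1/2)+f(2)=1\}$, applies the Krein--Milman theorem, and uses that $\cF_\Lambda$ is a closed face (Lemma \ref{L-4.3}) so that every extreme point of $K$ lies on an extreme ray of $\cF_\oc^+(0,\infty)$; Theorem \ref{T-3.2} together with Lemma \ref{L-4.4}\,(1), \eqref{F-4.9} and \eqref{F-4.10} then forces those extreme points to be normalized $g_{\alpha,\lambda}$ with $\lambda\in\Lambda$ (plus $g_{0,0}$ and $g_{\infty,\infty}$), giving $\cF_\Lambda\subset E_\Lambda$. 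You would need some argument of this kind (or an explicit recipe for redistributing the linear term over the integral) to close the gap.
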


\begin{proof}
Lemma \ref{L-4.4}\,(1) says that $g_{\alpha,\lambda}\in\cF_\Lambda$ if
$\alpha\in(0,\infty)$ and $\lambda\in\Lambda$. If $\lambda\in\Lambda\setminus\{0\}$, then
\begin{equation}\label{F-4.9}
g_{0,\lambda}(x)={1\over1+\lambda}+{1+2\lambda\over(1+\lambda)^2}\,(x-1)
+\biggl({\lambda\over1+\lambda}\biggr)^2{(x-1)^2\over x+\lambda}
\end{equation}
so that $g_{0,\lambda}\in\cF_\Lambda$. Also, if $\lambda\in\Lambda\setminus\{\infty\}$,
then
\begin{equation}\label{F-4.10}
g_{\infty,\lambda}(x)={1\over1+\lambda}-{1\over(1+\lambda)^2}\,(x-1)
+{1\over(1+\lambda)^2}\cdot{(x-1)^2\over x+\lambda}
\end{equation}
so that $g_{\infty,\lambda}\in\cF_\Lambda$. Furthermore,
$g_{0,0},g_{\infty,\infty}\in\cF_\Lambda$ (see \eqref{F-4.5}), and hence we have
$E_\Lambda\subset\cF_\Lambda$. To prove the converse, define
$$
K:=\{f\in\cF_\Lambda:f(1/2)+f(2)=1\}.
$$
Let $f\in K$ be expressed as in \eqref{F-4.3}. Since $f(1/2)\le1$ and $f(2)\le1$, we have
$$
f(1)\in[0,1],\quad f'(1)\in[-2,1],\quad
f(1)+f'(1)+\nu([0,\infty])=f(2)\in[0,1],
$$
which yield that
$$
f(x)\le3+x+3(x-1)^2\max\{1,2/x\},\qquad x\in(0,\infty).
$$
Hence we see that $K$ is a compact convex subset of $\cF(0,\infty)$. By the Krein-Milman
theorem we have $K=\overline{\conv}(\ex K)$. Since $K$ becomes a face of
$\{f\in\cF_\oc^+(0,\infty):f(1/2)+f(2)=1\}$ due to Lemma \ref{L-4.3}, every extreme point
of $K$ belongs to an extreme ray of $\cF_\oc^+(0,\infty)$. Thus, by Theorem \ref{T-3.2}
and Lemma \ref{L-4.4}\,(1) together with \eqref{F-4.9} and \eqref{F-4.10} we see that
\begin{align*}
\ex K&\subset
\biggl\{{1\over g_{\alpha,\lambda}(1/2)+g_{\alpha,\lambda}(2)}\,g_{\alpha,\lambda}:
\alpha\in[0,\infty],\ \lambda\in\Lambda\biggr\} \\
&\qquad\cup\biggl\{{1\over g_{0,0}(1/2)+g_{0,0}(2)}\,g_{0,0},
{1\over g_{\infty,\infty}(1/2)+g_{\infty,\infty}(2)}\,g_{\infty,\infty}\biggr\}.
\end{align*}
This shows that $\cF_\Lambda$ is generated by $\{g_{\alpha,\lambda}:\alpha\in[0,\infty],\,
\lambda\in\Lambda\}\cup\{g_{0,0},g_{\infty,\infty}\}$, so $\cF_\Lambda\subset E_\Lambda$
by definition of $E_\Lambda$. Therefore, $E_\Lambda=\cF_\Lambda$ and also the injectivity
of $\Lambda\mapsto E_\Lambda$ follows from Lemma \ref{L-4.4}\,(1).
\end{proof}

\noindent
{\it Proof of Theorem \ref{T-4.1}.}\enspace
By Lemmas \ref{L-4.4}--\ref{L-4.6} all of $F_{\alpha,\Lambda}$ and $E_\Lambda$ given in
(i) and (ii) of the theorem are closed faces of $\cF_\oc^+(0,\infty)$. So it remains to
prove that if $F$ ($\ne\{0\}$) is a closed face of $\cF_\oc^+(0,\infty)$, then either
$F=F_{\alpha,\Lambda}$ for some $\alpha\in[0,\infty]$ and some nonempty closed
$\Lambda\subset[0,\infty]$ or $F=E_\Lambda$ for some closed $\Lambda\subset[0,\infty]$.

(1)\enspace
First, assume that $F\subset F_\alpha$ for some $\alpha\in[0,\infty]$, and prove that
$F=F_{\alpha,\Lambda}$ for some nonempty closed $\Lambda\subset[0,\infty]$. Define
$$
K:=\begin{cases}
\{f\in F:f(\alpha+1)=1\} & \text{if $\alpha\in[0,\infty)$}, \\
\{f\in F:f(1)=1\} & \text{if $\alpha=\infty$}.
\end{cases}
$$
Then $K$ is a compact convex subset of $F_\alpha$ so that $K=\overline{\conv}(\ex K)$.
Define $\Lambda:=\{\lambda\in[0,\infty]:g_{\alpha,\lambda}\in F\}$. Since
$$
\psi_{\alpha,\lambda}:=\begin{cases}
(1+\lambda)g_{\alpha,\lambda} & \text{if $\lambda\in[0,\infty)$}, \\
g_{\alpha,\infty} & \text{if $\lambda=\infty$}
\end{cases}
$$
is continuous in $\lambda\in[0,\infty]$ in the pointwise topology (see \eqref{F-4.1}),
it follows that $\Lambda$ is a closed subset of $[0,\infty]$. Since $K$ is a face of
$F_\alpha^0$ where $F_\alpha^0$ is given in \eqref{F-3.4} and \eqref{F-3.5}, it is seen
as in the proof of Lemma \ref{L-4.4} that
$$
\ex K=\begin{cases}
\Bigl\{{1\over g_{\alpha,\lambda}(\alpha+1)}\,g_{\alpha,\lambda}:
\lambda\in\Lambda\Bigr\} & \text{if $\alpha\in[0,\infty)$}, \\
\Bigl\{{1\over g_{\alpha,\lambda}(1)}\,g_{\alpha,\lambda}:
\lambda\in\Lambda\Bigr\} & \text{if $\alpha=\infty$}.
\end{cases}
$$
This shows that $F=F_{\alpha,\Lambda}$ with $\Lambda\ne\emptyset$.

(2)\enspace
Next, assume that $F\not\subset F_\alpha$ for every $\alpha\in[0,\infty]$, and prove that
$F=E_\Lambda$ for some closed $\Lambda\subset[0,\infty]$. Note that at least
one of the following two cases holds:
\begin{itemize}
\item[(a)] there is an $f_0\in F$ such that
$f_0\notin\bigcup_{\alpha\in[0,\infty]}F_\alpha$,
\item[(b)] there are $f_1,f_2\in F$ such that $f_1\in F_{\alpha_1}$ and
$f_2\in F_{\alpha_2}$ with $\alpha_1,\alpha_2\in[0,\infty]$, $\alpha_1\ne\alpha_2$.
\end{itemize}
In the case (a), we have $f_0(\alpha_0)=\min_{x\in[0,\infty]}f_0(x)>0$ for some
$\alpha_0\in[0,\infty]$, where $f(\alpha_0)$ means $f(+0)$ or $\lim_{x\to\infty}f(x)$
for $\alpha_0=0$ or $\infty$, respectively. In the case (b), let $f_0:=f_1+f_2\in F$;
then the situation is same as in the case (a). Hence, in each case, there are $p,q>0$
such that $f_0(x)\ge p+qx$ for all $x\in(0,\infty)$. Since $F$ is a face of
$\cF_\oc^+(0,\infty)$, writing
$$
f_0(x)=p+qx+(f_0(x)-p-qx),
$$
we see that $1,x\in F$. Now assume that $F\ne E_\emptyset$ ($=\cF_\emptyset$ in
\eqref{F-4.5}). We prove that if $(\alpha,\lambda)$ satisfies one of
\begin{itemize}
\item[(i)] $\alpha\in(0,\infty)$ and $\lambda\in[0,\infty]$,
\item[(ii)] $\alpha=0$ and $\lambda\in(0,\infty]$,
\item[(iii)] $\alpha=\infty$ and $\lambda\in[0,\infty)$,
\end{itemize}
and if $g_{\alpha,\lambda}\in F$, then $g_{\beta,\lambda}\in F$ for all
$\beta\in[0,\infty]\setminus\{\alpha\}$. Define
\begin{equation}\label{F-4.11}
f_\beta(x):=g_{\alpha,\lambda}(x)
-\{g_{\alpha,\lambda}'(\beta)(x-\beta)+g_{\alpha,\lambda}(\beta)\}.
\end{equation}

First, assume (i). If $\beta>\alpha$, then $g_{\alpha,\lambda}'(\beta)>0$ and
$$
g_{\alpha,\lambda}(x)+(g_{\alpha,\lambda}'(\beta)\beta)1
=f_\beta(x)+g_{\alpha,\lambda}'(\beta)x+g_{\alpha,\lambda}(\beta)1.
$$
Since the above left-hand side belongs to $F$ and $f_\beta\in\cF_\oc^+(0,\infty)$, we see
that $f_\beta\in F$. If $0<\beta<\alpha$, then $g_{\alpha,\lambda}'(\beta)<0$ and
$$
g_{\alpha,\lambda}(x)+(-g_{\alpha,\lambda}'(\beta))x
=f_\beta(x)+(-g_{\alpha,\lambda}'(\beta)\beta)1+g_{\alpha,\lambda}(\beta)1.
$$
Since the above left-hand side belongs to $F$, we have $f_\beta\in F$ again. A direct
computation gives
\begin{equation}\label{F-4.12}
f_\beta=\biggl({\alpha+\lambda\over\beta+\lambda}\biggr)^2g_{\beta,\lambda}.
\end{equation}
Hence it follows that $g_{\beta,\lambda}\in F$ for all $\beta\in(0,\infty)$. We have
$g_{0,\lambda},g_{\infty,\lambda}\in F$ as well by taking limits
$g_{0,\lambda}=\lim_{\beta\searrow0}g_{\beta,\lambda}$ and
$g_{\infty,\lambda}=\lim_{\beta\to\infty}\beta^{-2}g_{\beta,\lambda}$. Next, assume (ii).
For any $\beta\in(0,\infty)$, $f_\beta$ given in \eqref{F-4.11} with $\alpha=0$ is in $F$
as above and \eqref{F-4.12} becomes $f_\beta=(\lambda/(\beta+\lambda))^2g_{\beta,\lambda}$
so that $g_{\beta,\lambda}\in F$, and also $g_{\infty,\lambda}\in F$ by taking limit.
Finally, assume (iii). For any $\beta\in(0,\infty)$, $f_\beta$ in \eqref{F-4.11} with
$\alpha=\infty$ is in $F$ as above and \eqref{F-4.12} becomes
$f_\beta=(1/(\beta+\lambda))^2g_{\beta,\lambda}$ so that $g_{\beta,\lambda}\in F$, and
$g_{0,\lambda}\in F$ by taking limit.

Now, we define $\Lambda:=\{\lambda\in[0,\infty]:g_{1,\lambda}\in F\}$, which is a closed
subset of $[0,\infty]$ as in the above proof of (1). From the facts proved above for the
cases (i)--(iii), we see that
\begin{align}
\{\lambda\in[0,\infty]:g_{\alpha,\lambda}\in F\}&=\Lambda\quad
\mbox{for all $\alpha\in(0,\infty)$}, \label{F-4.13} \\
\{\lambda\in[0,\infty]:g_{0,\lambda}\in F\}&=\Lambda\cup\{0\}, \label{F-4.14}\\
\{\lambda\in[0,\infty]:g_{\infty,\lambda}\in F\}&=\Lambda\cup\{\infty\}. \label{F-4.15}
\end{align}
Then it is obvious that $E_\Lambda\subset F$. Since $F$ is a closed face of
$\cF_\oc^+(0,\infty)$, it is seen as in the proof of Lemma \ref{L-4.6} that $F$ is
generated by
$\{g_{\alpha,\lambda}:\alpha,\lambda\in[0,\infty],\,g_{\alpha,\lambda}\in F\}$. Hence,
to prove that $F\subset E_\Lambda$, it suffices to show that if $g_{\alpha,\lambda}\in F$
then $g_{\alpha,\lambda}\in E_\Lambda$. Since $g_{0,0},g_{\infty,\infty}\in E_\Lambda$,
we may show this if $(\alpha,\lambda)$ satisfies one of (i)--(iii) above. But in these
cases, the assertion is obvious from \eqref{F-4.13}--\eqref{F-4.15}. Therefore,
$F=E_\Lambda$ and $\Lambda\ne\emptyset$ (thanks to the assumption $F\ne\cF_\emptyset$).

Finally, we prove the latter statement of Theorem \ref{T-4.1}. If $\Lambda\ne[0,\infty]$,
then it follows from the injectivity assertions in Lemmas \ref{L-4.4}--\ref{L-4.6} that
$F_{\alpha,\Lambda}$ and $E_\Lambda$ are not maximal closed faces of $\cF_\oc^+(0,\infty)$.
When $\Lambda=[0,\infty]$, $E_\Lambda=\cF_\oc^+(0,\infty)$. On the other hand, one can
easily see from the above proof of (2) that each $F_\alpha$ is a maximal closed face of
$\cF_\oc^+(0,\infty)$. Hence the maximal closed faces of $\cF_\oc^+(0,\infty)$ are only
$F_\alpha$'s.\qed

\medskip
We finally determine all simplicial closed faces of $\cF_\oc^+(0,\infty)$.

\begin{corollary}\label{C-4.7}
The family of simplicial closed faces of $\cF_\oc^+(0,\infty)$ are $F_{\alpha,\Lambda}$
for $\alpha\in[0,\infty]$ and for nonempty closed subsets $\Lambda$ of $[0,\infty]$.
\end{corollary}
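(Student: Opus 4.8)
The strategy is to go through the complete list of closed faces supplied by Theorem \ref{T-4.1} and, for each type, decide whether or not it admits a compact base that is a Choquet simplex. Theorem \ref{T-3.3}\,(iii) already handles the extremes $\Lambda=[0,\infty]$: each $F_\alpha=F_{\alpha,[0,\infty]}$ is generated by a Bauer simplex, hence is simplicial. I would first observe that the same argument localizes to an arbitrary nonempty closed $\Lambda\subset[0,\infty]$: for $\alpha\in[0,\infty)$ take $F_{\alpha,\Lambda}^0:=\{f\in F_{\alpha,\Lambda}:f(\alpha+1)=1\}$, a closed subset of the compact set $F_\alpha^0$, hence itself compact and convex; by Lemma \ref{L-4.4}\,(2) together with the uniqueness of the representation \eqref{F-3.10} its extreme points are exactly the normalized $\psi_{\alpha,\lambda}$ with $\lambda\in\Lambda$, and the unique extremal decomposition inherited from $F_\alpha^0$ shows $F_{\alpha,\Lambda}^0$ is a simplex; since $\{\psi_{\alpha,\lambda}:\lambda\in\Lambda\}$ is homeomorphic to the compact set $\Lambda$, it is in fact a Bauer simplex. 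The cases $\alpha=0$ and $\alpha=\infty$ are identical using \eqref{F-3.9}/\eqref{F-4.7} and \eqref{F-3.7}/\eqref{F-4.8} respectively, via Lemma \ref{L-4.5}. So every $F_{\alpha,\Lambda}$ is simplicial.

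It then remains to show that no face of type (ii), namely $E_\Lambda$, is simplicial — unless it already coincides with some $F_{\alpha,\Lambda'}$, which happens precisely for the degenerate values $\Lambda=\{0\}$ (giving $E_{\{0\}}=F_{0,\{0\}}=\{\lambda x\}$, cf.\ \eqref{F-4.2}) and $\Lambda=\{\infty\}$ (giving $E_{\{\infty\}}=F_{\infty,\{\infty\}}=\{\lambda\cdot1\}$), and for $\Lambda=\emptyset$ (giving $E_\emptyset=\cF_\emptyset=\{p+qx\}$, which is a two-dimensional simplicial cone). The plan for the non-degenerate $E_\Lambda$ is to exhibit an element with two genuinely different extremal decompositions, echoing \eqref{F-3.12}. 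Concretely, if $\Lambda$ contains a point $\lambda_0\in(0,\infty)$ one can mimic the identity $x+\frac{1}{x+1}=1+\frac{x^2}{x+1}$: both $g_{\infty,\lambda_0}(x)=\frac{1}{x+\lambda_0}$ and $g_{0,\lambda_0}(x)=\frac{x^2}{x+\lambda_0}$ lie in $E_\Lambda$ (Lemma \ref{L-4.6}), and the affine combination $\lambda_0 g_{\infty,\lambda_0}+g_{0,\lambda_0}=\frac{\lambda_0+x^2}{x+\lambda_0}=x+\frac{\lambda_0-x}{x+\lambda_0}+1-1=\,\ldots$ can be rearranged so that the left side is a sum of two extreme generators from \eqref{F-3.2} while the right side, after collecting the affine part, is a different such sum (an $\{1,x\}$-combination); since both $1$ and $x$ are extreme rays of $E_\Lambda$ and are distinct from the $g_{\cdot,\lambda_0}$, this is a bona fide non-unique extremal representation, so no base of $E_\Lambda$ can be a simplex. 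If instead $\Lambda\subset\{0,\infty\}$ with $\Lambda$ having two points, i.e.\ $\Lambda=\{0,\infty\}$, then $E_{\{0,\infty\}}$ contains $1,x,x^2$ and the sums $x^2+x\cdot 1$ versus suitable combinations of $x^2$ and $x$ — again one produces an element with two decompositions, or more simply one checks directly that the compact base is not a simplex because the extreme points $\{1,x,x^2\}$ span a triangle whose convex combinations fail the simplex (lattice-cone) condition in the ambient space; the cleanest route is still to display a non-unique representation of a single element.

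The main obstacle I anticipate is the bookkeeping in the second part: one must verify that whenever $E_\Lambda$ is genuinely larger than every $F_{\alpha,\Lambda'}$ (equivalently $\Lambda\notin\{\emptyset,\{0\},\{\infty\}\}$) one can always find an explicit element with two distinct extremal decompositions, uniformly over all such $\Lambda$. The delicate point is that the two decompositions must be made of \emph{extreme} elements of $E_\Lambda$, which by Theorem \ref{T-3.2} and Lemma \ref{L-4.6} are exactly the $g_{\alpha,\lambda}$ with $\lambda\in\Lambda$ together with $g_{0,0}=x$ and $g_{\infty,\infty}=1$; so I must keep the parameter $\lambda$ appearing in the construction inside $\Lambda$, which is why choosing $\lambda_0\in\Lambda$ (and the always-available extra generators $1$ and $x$) is the right device. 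Once the identity is written out and both sides are recognized as distinct nonnegative combinations of these extreme generators, non-simpliciality follows from the standard fact that a compact base of a cone is a Choquet simplex if and only if every point has a unique representing measure on the extreme boundary. Assembling these observations — simpliciality of all $F_{\alpha,\Lambda}$ via Lemmas \ref{L-4.4}--\ref{L-4.5} and Theorem \ref{T-3.3}, and non-simpliciality of every other closed face via an explicit \eqref{F-3.12}-type witness — completes the proof of the corollary.
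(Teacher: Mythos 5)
Your treatment of the faces $F_{\alpha,\Lambda}$ is sound and is essentially the paper's argument: $F_{\alpha,\Lambda}=F_\alpha\cap\cF_\Lambda$ is a closed face of the simplicial cone $F_\alpha$ by Theorem \ref{T-4.1}, and a closed face of a simplex base is again a simplex. Your witness for non-simpliciality of $E_\Lambda$ also works whenever $\Lambda$ meets $(0,\infty)$: for $\lambda_0\in(0,\infty)\cap\Lambda$ one has
$g_{0,\lambda_0}+\lambda_0\,g_{\infty,\infty}=g_{0,0}+\lambda_0^2\,g_{\infty,\lambda_0}$,
i.e.\ $\frac{x^2}{x+\lambda_0}+\lambda_0=x+\frac{\lambda_0^2}{x+\lambda_0}$, two distinct extremal decompositions generalizing \eqref{F-3.12}.

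The gap is in your list of ``degenerate'' cases, which rests on a misreading of $E_\Lambda$. By definition $E_\Lambda$ is generated by $\{g_{\alpha,\lambda}:\alpha\in[0,\infty],\ \lambda\in\Lambda\}$ together with $\{1,x\}$; the parameter $\alpha$ always runs over all of $[0,\infty]$. Hence $E_{\{0\}}$ contains every $(x-\alpha)^2/x$ as well as $1/x$, $1$ and $x$, so it is far from the ray $F_{0,\{0\}}=\{cx\}$, and likewise $E_{\{\infty\}}$ contains every $(x-\alpha)^2$ and is not the ray $F_{\infty,\{\infty\}}=\{c\cdot 1\}$. These two faces are \emph{not} simplicial, but your construction needs a $\lambda_0\in(0,\infty)\cap\Lambda$, and your fallback only addresses $\Lambda=\{0,\infty\}$ (and even there the description of the extreme points as just $\{1,x,x^2\}$ is incorrect, since all $(x-\alpha)^2$ and $(x-\alpha)^2/x$ are extreme generators). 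So $E_{\{0\}}$ and $E_{\{\infty\}}$ are left uncovered --- indeed wrongly declared simplicial. The paper closes all nonempty $\Lambda$ at once by varying $\alpha$ instead of $\lambda$: $(x-1)^2+(x-2)^2=2\bigl(x-\frac32\bigr)^2+\frac12$, i.e.\ $g_{1,\infty}+g_{2,\infty}=2g_{3/2,\infty}+\frac12 g_{\infty,\infty}$ when $\infty\in\Lambda$, and the same identity divided by $x+\lambda$ when some $\lambda\in[0,\infty)$ lies in $\Lambda$; all four functions involved are extreme generators of $E_\Lambda$ for every such $\Lambda$, including $\Lambda=\{0\}$ and $\Lambda=\{\infty\}$. (A separate small point: your claim that $E_\emptyset$ ``coincides with some $F_{\alpha,\Lambda'}$'' is false --- its two extreme rays $1=g_{\infty,\infty}$ and $x=g_{0,0}$ lie in different $F_\alpha$'s; it is a two-dimensional simplicial closed face, an edge case the paper's own proof also leaves untouched.)
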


\begin{proof}
By Theorem \ref{T-3.3}, $F_\alpha$'s are simplicial closed convex cones of
$\cF_\oc^+(0,\infty)$. Since $F_{\alpha,\Lambda}$ is a closed face of $F_\alpha$ by
Theorem \ref{T-4.1}, $F_{\alpha,\Lambda}$ is a simplicial closed face of
$\cF_\oc^+(0,\infty)$. Next, to show that $E_\Lambda$'s are not simplicial, consider the
identity
$$
(x-1)^2+(x-2)^2=2\biggl(x-{3\over2}\biggr)^2+{1\over2},
\ \ \mbox{i.e.,}
\ \ g_{1,\infty}+g_{2,\infty}=2g_{3/2,\infty}+{1\over2}g_{\infty,\infty}.
$$
When $\infty\in\Lambda$, the above shows that $E_\Lambda$ is not simplicial. When
$\Lambda$ contains a  $\lambda\in[0,\infty)$, then the same holds since
$$
{(x-1)^2\over x+\lambda}+{(x-2)^2\over x+\lambda}
=2\,{\bigl(x-{3\over2}\bigr)^2\over x+\lambda}+{1\over2}\cdot{1\over x+\lambda},
\ \ \mbox{i.e.,}
\ \ g_{1,\lambda}+g_{2,\lambda}=2g_{3/2,\lambda}+{1\over2}g_{\infty,\lambda}.
$$
\end{proof}

\section{Operator convex functions on $(-1,1)$}

In this section we consider the non-negative operator convex functions on a finite open
interval. Operator convex functions on any finite open interval $(a,b)$ are transformed
into those on $(-1,1)$ via the affine function $x\in(-1,1)\mapsto((b-a)/2)x+(b+a)/2$.
Therefore, we may assume without loss of generality that the interval is $(-1,1)$. We
write $\cF(-1,1)$ for the locally convex topological space of all real functions on
$(-1,1)$ with the pointwise convergence topology. Let $\cF_\oc^+(-1,1)$ be the closed
convex cone in $\cF(-1,1)$ consisting of all operator convex functions $f\ge0$ on $(-1,1)$,
and $\cF_\oc^{++}(-1,1)$ be the convex cone of all operator convex functions $f>0$ on
$(-1,1)$.

For each $\alpha\in[-1,1]$ define
\begin{equation}\label{F-5.1}
\widetilde F_\alpha:=\{f\in\cF_\oc^+(-1,1):f(\alpha)=0\},
\end{equation}
where $f(\alpha)$ is $\lim_{x\searrow-1}f(x)$ or $\lim_{x\nearrow1}f(x)$ for
$\alpha=-1$ or $1$, respectively. It is obvious that $\widetilde F_\alpha$ is a closed
face of $\cF_\oc^+(-1,1)$ for every $\alpha\in[-1,1]$. For every $f\in\widetilde F_\alpha$
with $\alpha\in(-1,1)$, since $f(\alpha)=f'(\alpha)=0$, the expression of
Theorem \ref{T-2.1} gives
\begin{equation}\label{F-5.2}
f(x)=\int_{[-1,1]}{(x-\alpha)^2\over1-\lambda x}\,d\nu(\lambda),
\qquad x\in(-1,1),
\end{equation}
with a unique finite positive measure $\nu$ on $[-1,1]$ (defined by
$d\nu(\lambda):=(1-\lambda\alpha)^{-1}\,d\mu(\lambda)$ from $\mu$). This shows that
$\widetilde F_\alpha$ is a simplicial closed face of $\cF_\oc^+(-1,1)$ generated by the
extreme elements $(x-\alpha)^2/(1-\lambda x)$ ($-1\le\lambda\le1$). To prove the same
assertion when $\alpha=\pm1$, the following lemma is necessary. Although the integral
expression is in the same form as \eqref{F-5.2} with $\alpha=\pm1$, we need an independent
proof.

\begin{lemma}\label{L-5.1}
A real function $f$ on $(-1,1)$ belongs to $\widetilde F_{-1}$ if and only if there exists
a finite positive measure $\nu$ on $[-1,1]$ such that
$$
f(x)=\int_{[-1,1]}{(x+1)^2\over1-\lambda x}\,d\nu(\lambda),
\qquad x\in(-1,1).
$$
Furthermore, the measure $\nu$ is unique. A similar result holds for $\widetilde F_1$ with
$(x-1)^2$ in place of $(x+1)^2$ inside the integral.
\end{lemma}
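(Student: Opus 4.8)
The plan is to reduce the lemma to the Löwner theory of operator monotone functions on $(-1,1)$ through the factorization $f(x)=(x+1)k(x)$, the boundary analogue of the correspondence ``$f(x)/x$ operator monotone'' that underlies Theorem~\ref{T-3.3}(ii) on $(0,\infty)$; here $f(-1)$, $k(-1)$, etc.\ denote one-sided limits at $-1$. For necessity, let $f\in\widetilde F_{-1}$. By the theorem of Kraus \cite{Kr}, for each $\alpha\in(-1,1)$ the function $x\mapsto(f(x)-f(\alpha))/(x-\alpha)$ is operator monotone on $(-1,1)$; letting $\alpha\searrow-1$, using $f(\alpha)\to f(-1)=0$ and $x-\alpha\to x+1$, and using that operator monotonicity is preserved under pointwise convergence, we obtain that $k(x):=f(x)/(x+1)$ is operator monotone and $\ge0$ on $(-1,1)$, hence non-decreasing, so $k(-1)=\inf_{(-1,1)}k\in[0,\infty)$. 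By the Löwner integral representation on $(-1,1)$ (see \cite{Bh,Hi}) there are a constant $k(0)\ge0$ and a finite positive measure $m$ on $[-1,1]$ with $k(x)=k(0)+\int_{[-1,1]}\frac{x}{1-\lambda x}\,dm(\lambda)$. The crucial point is a boundary estimate: for $x\in(-1,0)$ the integrands $-x/(1-\lambda x)$ are non-negative, $\int_{[-1,1]}\frac{-x}{1-\lambda x}\,dm(\lambda)=k(0)-k(x)\le k(0)$, and as $x\searrow-1$ these integrands increase monotonically to $1/(1+\lambda)$ for $\lambda\in(-1,1]$ and to $+\infty$ for $\lambda=-1$; hence by monotone convergence $m(\{-1\})=0$ and $\int_{(-1,1]}dm(\lambda)/(1+\lambda)=k(0)-k(-1)<+\infty$. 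Therefore $d\nu(\lambda):=dm(\lambda)/(1+\lambda)$ on $(-1,1]$, together with the point mass $\nu(\{-1\}):=k(-1)$, defines a finite positive measure $\nu$ on $[-1,1]$, and a direct computation using the identities
\[
\frac{(x+1)^2}{1-\lambda x}=1+(2+\lambda)x+(1+\lambda)^2\,\frac{x^2}{1-\lambda x},\qquad
\frac{x(1+x)}{1-\lambda x}=x+(1+\lambda)\,\frac{x^2}{1-\lambda x}
\]
shows that $f(x)=(x+1)k(x)=\int_{[-1,1]}\frac{(x+1)^2}{1-\lambda x}\,d\nu(\lambda)$.

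For sufficiency, observe that each $g_\lambda(x):=(x+1)^2/(1-\lambda x)$ ($\lambda\in[-1,1]$) belongs to $\widetilde F_{-1}$: it is non-negative, the first identity above exhibits it in the form of Theorem~\ref{T-2.1} with $\alpha=0$ and representing measure $(1+\lambda)^2\delta_\lambda$, so $g_\lambda$ is operator convex, and $g_\lambda(x)\le(x+1)^2/(1+x)=x+1\to0$ as $x\searrow-1$ for $x\in(-1,0)$. Hence, for a finite positive measure $\nu$ on $[-1,1]$, the function $f(x)=\int_{[-1,1]}g_\lambda(x)\,d\nu(\lambda)$ is non-negative, satisfies $f(x)\le(x+1)\nu([-1,1])\to0$ as $x\searrow-1$, and is operator convex (the operator convexity inequality for $f$ follows by integrating the one for each $g_\lambda$ against $\nu$); thus $f\in\widetilde F_{-1}$. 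For uniqueness, on $(-1,1)$ we have $\frac{f(x)}{(x+1)^2}=\int_{[-1,1]}\frac{d\nu(\lambda)}{1-\lambda x}=\sum_{n\ge0}\bigl(\int_{[-1,1]}\lambda^n\,d\nu(\lambda)\bigr)x^n$, so the moments of $\nu$ are the Taylor coefficients of $f(x)/(x+1)^2$ at $0$ and are determined by $f$; since a finite measure on the compact set $[-1,1]$ is determined by its moments, $\nu$ is unique. Finally, the reflection $x\mapsto-x$ is an affine automorphism of $(-1,1)$ preserving non-negativity and operator convexity and carrying $\widetilde F_{-1}$ onto $\widetilde F_1$; applying the result to $x\mapsto f(-x)$ and substituting $\lambda\mapsto-\lambda$ gives the statement for $\widetilde F_1$, with $(x-1)^2$ in place of $(x+1)^2$.

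The main obstacle is the boundary estimate forcing $\int_{(-1,1]}dm(\lambda)/(1+\lambda)<+\infty$ (equivalently, that the candidate measure $\nu$ has finite total mass): both Kraus's theorem and Theorem~\ref{T-2.1} are anchored at an \emph{interior} point, and pushing the anchor to the endpoint $-1$ is delicate precisely where the weight $1+\lambda$ degenerates, which is why an independent argument is needed here. One can instead argue directly from Theorem~\ref{T-2.1} with $\alpha=0$, in which case the required summability takes the form $\int d\mu(\lambda)/(1+\lambda)^2<+\infty$ and follows from the finiteness of $f'(-1)$.
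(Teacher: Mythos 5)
Your proof is correct and follows essentially the same route as the paper's: factor $f(x)=(x+1)k(x)$, observe that $k$ is a non-negative operator monotone function on $(-1,1)$, apply the L\"owner representation, and use the boundary limit $x\searrow-1$ (via monotone convergence) to force $m(\{-1\})=0$ and $\int_{(-1,1]}(1+\lambda)^{-1}\,dm<+\infty$, which yields the measure $\nu$ exactly as in the paper. The only cosmetic differences are that you justify the operator monotonicity of $k$ via Kraus's theorem and a limit in $\alpha$, and you derive uniqueness of $\nu$ from its moments rather than from the uniqueness of the L\"owner representation; both are fine.
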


\begin{proof}
The ``if\," part is easy. To prove the ``only if\,", assume that $f\in\widetilde F_{-1}$.
Since the function $h(x):=f(x)/(x+1)$ is a non-negative operator monotone function on
$(-1,1)$, the well-known integral representation (see e.g., \cite[Corollary V.4.5]{Bh})
says that there are a unique $a\in\bR$ and a unique finite positive measure $\mu$ on
$[-1,1]$ such that
\begin{equation}\label{F-5.3}
h(x)=a+\int_{[-1,1]}{x\over1-\lambda x}\,d\mu(\lambda),\qquad x\in(-1,1).
\end{equation}
Since
$$
0\le b:=\lim_{x\searrow-1}h(x)=a-\int_{[-1,1]}{1\over1+\lambda}\,d\mu(\lambda),
$$
we have $\int_{[-1,1]}(1+\lambda)^{-1}\,d\mu(\lambda)<+\infty$ (hence $\mu(\{-1\})=0$) and
$$
h(x)=b+\int_{(-1,1]}{x+1\over(1-\lambda x)(1+\lambda)}\,d\mu(\lambda)
=\int_{[-1,1]}{x+1\over1-\lambda x}\,d\nu(\lambda),\qquad x\in(-1,1),
$$
where $\nu$ is defined by $d\nu(\lambda):=(1+\lambda)^{-1}\,d\mu(\lambda)$ on $(-1,1]$ and
$\nu(\{-1\}):=b$. Therefore, $f$ admits the desired integral expression. The uniqueness of
$\nu$ is easily verified from that of $\mu$ in \eqref{F-5.3}.
\end{proof}

Now, we define
$$
\tilde g_{\alpha,\lambda}:={(x-\alpha)^2\over1-\lambda x}\qquad
(-1\le\alpha\le1,\ -1\le\lambda\le1).
$$
Continuing the above discussion, one can argue as in Section 3 to prove the following:

\begin{theorem}\label{T-5.2}
The extreme rays of $\cF_\oc^{++}(-1,1)$ are given by one of $\tilde g_{\alpha,\lambda}$
for $\alpha=\pm1$ and $-1\le\lambda\le1$. Also, the extreme rays of $\cF_\oc^+(-1,1)$ are
given by one of $\tilde g_{\alpha,\lambda}(x)$ for $-1\le\lambda\le1$ and
$-1\le\alpha\le1$.
\end{theorem}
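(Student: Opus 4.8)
The plan is to mirror the argument of Section~3 (the proofs of Theorems \ref{T-3.1}--\ref{T-3.3}) as closely as possible, using the integral representation of Theorem \ref{T-2.1} together with Lemma \ref{L-5.1} in place of the representations \eqref{F-2.1}, \eqref{F-3.7}, \eqref{F-3.9}. First I would observe that the sets $\widetilde F_\alpha$ defined in \eqref{F-5.1} are faces of $\cF_\oc^+(-1,1)$ for every $\alpha\in[-1,1]$ (obvious for $\alpha\in(-1,1)$ since $f(\alpha)=0$ is an extreme-point--respecting linear condition, and for $\alpha=\pm1$ one argues as in the proof of Theorem~\ref{T-3.3}(ii) for $\alpha=0$, using convexity to pass the vanishing boundary limit through pointwise limits). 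The discussion preceding Lemma~\ref{L-5.1} already shows, via \eqref{F-5.2} and the uniqueness of the representing measure, that $\widetilde F_\alpha$ for $\alpha\in(-1,1)$ is a simplicial closed face whose extreme rays are given by $\tilde g_{\alpha,\lambda}$, $-1\le\lambda\le1$; Lemma~\ref{L-5.1} extends this to $\alpha=\pm1$ (here $\tilde g_{\pm1,\lambda}=(x\mp1)^2/(1-\lambda x)$, and the unique measure $\nu$ again yields that the base $\{f\in\widetilde F_{\pm1}:f(0)=1\}$ is a Bauer simplex with extreme points $\{\tilde g_{\pm1,\lambda}/\tilde g_{\pm1,\lambda}(0)\}$, homeomorphic to $[-1,1]$). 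Since each $\widetilde F_\alpha$ is a face, each extreme point of its base base is an extreme ray of the ambient cone $\cF_\oc^+(-1,1)$, so every $\tilde g_{\alpha,\lambda}$ with $-1\le\alpha\le1$, $-1\le\lambda\le1$ gives an extreme ray.

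Next I would show there are no other extreme rays of $\cF_\oc^+(-1,1)$. Given $f\in\cF_\oc^+(-1,1)$, since $f$ is convex and $C^1$ on $(-1,1)$, it attains its infimum over $[-1,1]$ (interpreting $f(\pm1)$ as the boundary limits, possibly $+\infty$ at $\pm1$; if the infimum is $0$ then $f\in\widetilde F_{\alpha_0}$ for the minimizing $\alpha_0$, and the extremal decomposition of $f$ into $\tilde g_{\alpha_0,\lambda}$'s is already given). If the infimum is a positive number $a_0=f(\alpha_0)>0$, then $f-a_0\in\widetilde F_{\alpha_0}$ and hence $f=a_0\cdot 1+(f-a_0)$ is a sum of two distinct nonzero elements of $\cF_\oc^+(-1,1)$ unless $f$ is already a positive constant — and the constant function $1$ equals $\tilde g_{1,1}+\tilde g_{-1,1}$ up to scaling, hence is not extreme. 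So no $f$ with positive infimum is extreme. Combined with the extremal decomposition $f = a_0 \cdot 1 + \int \tilde g_{\alpha_0,\lambda}\,d\nu(\lambda)$ available for every $f$, this both completes the list of extreme rays of $\cF_\oc^+(-1,1)$ and shows that $\cF_\oc^+(-1,1)$ is the closed convex cone generated by the $\tilde g_{\alpha,\lambda}$'s. For $\cF_\oc^{++}(-1,1)$ I would argue exactly as in the proof of Theorem~\ref{T-3.1}: the functions $\tilde g_{\pm1,\lambda}$ lie in $\cF_\oc^{++}(-1,1)$ (they are strictly positive on $(-1,1)$ since their only zero is at the endpoint) and are extreme there because $\widetilde F_{\pm1}$ are faces; meanwhile any $f\in\cF_\oc^{++}(-1,1)$ not lying in $\widetilde F_1\cup\widetilde F_{-1}$ has a strictly positive infimum over $(-1,1)$ attained somewhere (by strict positivity, the boundary limits are bounded below by the same constant), so one writes $f=(f-a_0)+a_0$ with $a_0=f(\alpha_0)/2$, both summands in $\cF_\oc^{++}(-1,1)$ and distinct, hence $f$ is not extreme.

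The main obstacle will be the careful handling of the boundary points $\alpha=\pm1$, where $f(\alpha)$ means a one-sided limit that can be $+\infty$: one must verify that $\widetilde F_{\pm1}$ really is closed and a face (done via Lemma~\ref{L-5.1}'s unique representation, together with the convexity-plus-pointwise-limit argument analogous to the closedness of $F_0$ in Theorem~\ref{T-3.3}(ii)), and that the infimum of a general $f\in\cF_\oc^+(-1,1)$ over the closed interval $[-1,1]$ is genuinely attained at some $\alpha_0\in[-1,1]$ (which follows from $f'$ being increasing: either $f'\ge0$ throughout and the inf is approached as $x\searrow-1$, or $f'\le0$ throughout and the inf is approached as $x\nearrow1$, or $f'$ changes sign at an interior minimizer). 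Everything else is a routine transcription of the $(0,\infty)$ arguments, and the final identities witnessing non-simpliciality — e.g.\ $\tilde g_{1,\lambda}+\tilde g_{-1,\lambda}$ versus $2\tilde g_{0,\lambda}+\tfrac{(\text{const})}{1-\lambda x}$-type decompositions, paralleling \eqref{F-3.12} — are immediate, though they are not needed for the stated Theorem~\ref{T-5.2} itself.
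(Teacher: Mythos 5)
Your proposal is correct and follows essentially the same route as the paper, which itself only remarks that Theorem \ref{T-5.2} is proved ``as in Section 3'' using the representation \eqref{F-5.2} for interior $\alpha$ and Lemma \ref{L-5.1} for $\alpha=\pm1$; your handling of the boundary faces $\widetilde F_{\pm1}$, the attained minimizer $\alpha_0\in[-1,1]$, and the splitting $f=a_0\cdot 1+(f-a_0)$ is exactly the intended transcription of the proofs of Theorems \ref{T-3.1}--\ref{T-3.3}. One small index slip: the constant function decomposes as $\tilde g_{1,1}+\tilde g_{-1,-1}=(1-x)+(1+x)=2$, not as $\tilde g_{1,1}+\tilde g_{-1,1}$ (the latter equals $(2+2x^2)/(1-x)$, which is not constant); the surrounding argument is unaffected.
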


For each $\alpha\in[-1,1]$ and each nonempty closed subset $\Lambda$ of $[-1,1]$, we
denote by $\widetilde F_{\alpha,\Lambda}$ the closed convex cone generated by
$\{\tilde g_{\alpha,\lambda}:\lambda\in\Lambda\}$, and by $\widetilde E_\Lambda$ the
closed convex cone generated by
$\{\tilde g_{\alpha,\lambda}:\alpha\in[-1,1],\,\lambda\in\Lambda\}$ and
$\{\tilde g_{-1,-1},\tilde g_{1,1}\}=\{x+1,1-x\}$. Then
$\widetilde F_\alpha=\widetilde F_{\alpha,[-1,1]}$ for every $\alpha\in[-1,1]$, where
$\widetilde F_\alpha$ is given in \eqref{F-5.1}. Moreover, for each
$f\in\cF_\oc^+(-1,1)$ we write $\Sigma_f$ ($\subset[-1,1]$) for the support of the
representing measure $\mu$ of $f$ in Theorem \ref{T-2.1}, and for each closed subset
$\Lambda$ of $[-1,1]$ define
$$
\widetilde\cF_\Lambda:=\{f\in\cF_\oc^+(-1,1):\Sigma_f\subset\Lambda\}.
$$
In particular,
$$
\widetilde\cF_\emptyset=\{p+qx:p\pm q\ge0\}.
$$

By arguments similar to those in Lemmas \ref{L-4.3}--\ref{L-4.6} we see that
$\widetilde\cF_\Lambda$ is a closed face of $\cF_\oc^+(-1,1)$ and
$$
\widetilde F_{\alpha,\Lambda}
=\widetilde F_\alpha\cap\widetilde\cF_\Lambda\quad(-1\le\alpha\le1),
\qquad\widetilde E_\Lambda=\widetilde\cF_\Lambda.
$$
Finally, one can obtain the following counterpart of Theorem \ref{T-4.1}. The proof is
more or less similar to that of Theorem \ref{T-4.1}, so the details are left to the reader.

\begin{theorem}\label{T-5.3}
The family of all closed faces ($\ne\{0\}$) of $\cF_\oc^+(-1,1)$ is given by
\begin{itemize}
\item[(i)] $\widetilde F_{\alpha,\Lambda}$ for $\alpha\in[-1,1]$ and for nonempty closed
subsets $\Lambda$ of $[-1,1]$,
\item[(ii)] $\widetilde E_\Lambda$ for closed subsets $\Lambda$ of $[-1,1]$.
\end{itemize}

Furthermore, the family of maximal closed faces of $\cF_\oc^+(-1,1)$ is given by
$\widetilde F_\alpha$ for $\alpha\in[-1,1]$.
\end{theorem}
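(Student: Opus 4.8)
The plan is to mirror the proof of Theorem \ref{T-4.1} step by step, transporting each ingredient from the half-line setting $(0,\infty)$ to the finite interval $(-1,1)$. The paper has already set up the analogues needed: Theorem \ref{T-2.1} supplies the unique integral representation with representing measure $\mu$ on $[-1,1]$; Lemma \ref{L-5.1} handles the boundary cases $\alpha=\pm1$; Theorem \ref{T-5.2} identifies all extreme rays; and the remarks preceding the theorem assert that $\widetilde\cF_\Lambda$ is a closed face and that $\widetilde F_{\alpha,\Lambda}=\widetilde F_\alpha\cap\widetilde\cF_\Lambda$, $\widetilde E_\Lambda=\widetilde\cF_\Lambda$. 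So the "forward'' direction — that each $\widetilde F_{\alpha,\Lambda}$ and each $\widetilde E_\Lambda$ is a closed face — is immediate from those facts together with the observation that an intersection of closed faces is a closed face. What remains is the converse: every closed face $F\neq\{0\}$ is of one of these two types.

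First I would establish the analogue of the dichotomy used in the $(0,\infty)$ proof: either $F\subset\widetilde F_\alpha$ for some $\alpha\in[-1,1]$, or not. In the first case, pick a normalization functional — say $f\mapsto f(\alpha_0)$ for a suitable interior point $\alpha_0$, replacing $\alpha+1$ by $\alpha_0$ — to cut $F$ down to a compact convex base $K$ (compactness coming from the growth bounds implicit in \eqref{F-5.2} and Lemma \ref{L-5.1}, exactly as in Theorem \ref{T-3.3}). Apply Krein–Milman; since $K$ is a face of the base of $\widetilde F_\alpha$, every extreme point lies on an extreme ray of $\cF_\oc^+(-1,1)$, and by Theorem \ref{T-5.2} these are the $\tilde g_{\alpha,\lambda}$. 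Setting $\Lambda:=\{\lambda\in[-1,1]:\tilde g_{\alpha,\lambda}\in F\}$ — closed because $\lambda\mapsto(1-\lambda x)\,\tilde g_{\alpha,\lambda}$ is pointwise continuous — gives $F=\widetilde F_{\alpha,\Lambda}$.

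In the second case, one shows $1\pm x\in F$: as in the $(0,\infty)$ argument, either $F$ contains an $f_0$ not vanishing anywhere on $[-1,1]$, or $F$ contains $f_1\in\widetilde F_{\alpha_1}$, $f_2\in\widetilde F_{\alpha_2}$ with $\alpha_1\neq\alpha_2$ and one takes $f_0=f_1+f_2$; in either case $f_0\geq p+qx$ with $p\pm q>0$, so the face property forces $p+qx\in F$, whence $1+x,1-x\in F$ (here one uses that $1+x$ and $1-x$ themselves lie in $\cF_\oc^+(-1,1)$, e.g.\ as $\tilde g_{-1,-1}$ and $\tilde g_{1,1}$). Then, for $\tilde g_{\alpha,\lambda}\in F$ with $(\alpha,\lambda)$ in an appropriate parameter range, subtracting the tangent line at $x=\beta$ and using that $1,x\in F$ produces a positive multiple of $\tilde g_{\beta,\lambda}$ inside $F$, exactly as \eqref{F-4.11}–\eqref{F-4.12}; taking limits gives the boundary values $\beta=\pm1$. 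Defining $\Lambda:=\{\lambda:\tilde g_{0,\lambda}\in F\}$ and arguing as in Lemma \ref{L-4.6} that $F$ is generated by the extreme rays it contains, one concludes $F=\widetilde E_\Lambda$. The maximality statement then follows from the injectivity of $\Lambda\mapsto\widetilde F_{\alpha,\Lambda}$ and $\Lambda\mapsto\widetilde E_\Lambda$ (analogues of Lemmas \ref{L-4.4}–\ref{L-4.6}) together with $\widetilde E_{[-1,1]}=\cF_\oc^+(-1,1)$ and the fact, visible from the above, that each $\widetilde F_\alpha$ is maximal.

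The main obstacle is bookkeeping rather than conceptual: the "tangent-line subtraction'' step requires care about which $(\alpha,\lambda)$ pairs are genuinely distinct extreme rays (the boundary values $\alpha=\pm1$ and $\lambda=\pm1$ overlap with the affine functions $1\pm x$, just as $g_{0,0}$ and $g_{\infty,\infty}$ did on the half-line), so the case analysis parametrizing which $\tilde g_{\alpha,\lambda}$ lie in $F$ must be organized so that no extreme ray is double-counted or omitted. Everything else — the compactness of the bases, the continuity giving closedness of $\Lambda$, the Krein–Milman/face reduction — is a direct transcription of Section 4, which is presumably why the authors relegate it to the reader; I would write it out in the order above, flagging only the identity replacing \eqref{F-4.12} as the one computation worth recording explicitly.
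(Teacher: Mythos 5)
Your proposal is correct and is essentially the argument the paper intends: the authors explicitly state that the proof of Theorem \ref{T-5.3} is ``more or less similar to that of Theorem \ref{T-4.1}'' and leave the details to the reader, and your transcription (including the key identity that $\tilde g_{\alpha,\lambda}$ minus its tangent line at $x=\beta$ equals $\frac{1-\lambda\alpha}{1-\lambda\beta}\,\tilde g_{\beta,\lambda}$, replacing \eqref{F-4.12}) matches that template, with the dichotomy, Krein--Milman reduction, and bookkeeping of the overlaps at $(\alpha,\lambda)=(\mp1,\mp1)$ handled as in Section 4. One trivial slip: $(1-\lambda x)\,\tilde g_{\alpha,\lambda}=(x-\alpha)^2$ is constant in $\lambda$ and so cannot witness closedness of $\Lambda$, but no renormalization is needed here because $\lambda\mapsto\tilde g_{\alpha,\lambda}$ is itself pointwise continuous on $[-1,1]$ (since $1-\lambda x>0$ for $|x|<1$ and $|\lambda|\le1$).
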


\begin{corollary}\label{C-5.4}
The family of simplicial closed faces of $\cF_\oc^+(-1,1)$ is given by
$\widetilde F_{\alpha,\Lambda}$ for $\alpha\in[-1,1]$ and for nonempty closed subsets
$\Lambda$ of $[-1,1]$.
\end{corollary}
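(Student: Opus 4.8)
The plan is to mirror the argument that proved Corollary~\ref{C-4.7}, using the structural results already established for $\cF_\oc^+(-1,1)$. The claim has two halves: that each $\widetilde F_{\alpha,\Lambda}$ is a simplicial closed face, and that no $\widetilde E_\Lambda$ is simplicial. For the first half, I would note that the discussion preceding Lemma~\ref{L-5.1} together with Lemma~\ref{L-5.1} itself shows that each $\widetilde F_\alpha$ ($\alpha\in[-1,1]$) is a simplicial closed convex cone: it has a compact base (normalize, say, by $f(\xi)=1$ for a suitable $\xi\ne\alpha$ using the integral expressions \eqref{F-5.2} or the formula in Lemma~\ref{L-5.1}), and the uniqueness of the representing measure $\nu$ makes that base a Bauer simplex with extreme points $\tilde g_{\alpha,\lambda}$ ($-1\le\lambda\le1$), homeomorphic to $[-1,1]$. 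By Theorem~\ref{T-5.3}, $\widetilde F_{\alpha,\Lambda}$ is a closed face of $\cF_\oc^+(-1,1)$, hence a closed face of $\widetilde F_\alpha$; and a closed face of a simplicial cone with compact Bauer base is again simplicial (it corresponds to restricting the measure $\nu$ to the closed subset $\Lambda$, and the associated base $\{f\in\widetilde F_{\alpha,\Lambda}:f(\xi)=1\}$ is a closed face of the Bauer simplex, hence itself a Bauer simplex). This gives one inclusion.

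For the converse, I would show that every $\widetilde E_\Lambda$ fails to be simplicial by exhibiting, for each closed $\Lambda\subset[-1,1]$, an element with two distinct extremal decompositions — exactly the strategy used in Corollary~\ref{C-4.7}. If $\Lambda$ contains some $\lambda\in[-1,1]$, pick two values $\alpha_1\ne\alpha_2$ in $(-1,1)$ and use an algebraic identity of the shape
$$
\frac{(x-\alpha_1)^2}{1-\lambda x}+\frac{(x-\alpha_2)^2}{1-\lambda x}
=c_1\,\frac{(x-\beta)^2}{1-\lambda x}+c_0\,\frac{1}{1-\lambda x}
$$
with $\beta=(\alpha_1+\alpha_2)/2$, $c_1=2$, and $c_0\ge0$ chosen so the polynomials match (concretely $c_0=\tfrac12(\alpha_1-\alpha_2)^2$), i.e. $\tilde g_{\alpha_1,\lambda}+\tilde g_{\alpha_2,\lambda}=2\tilde g_{\beta,\lambda}+c_0\,\tilde g_{\cdot,\lambda}$ where $1/(1-\lambda x)$ is itself one of the extreme elements (it equals $\tilde g_{\alpha,\lambda}$ only in degenerate cases, so more care is needed — see below). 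Since all these functions lie in $\widetilde E_\Lambda$, the element on the left has two distinct extremal representations, so no compact base of $\widetilde E_\Lambda$ can be a simplex. The remaining case $\Lambda=\emptyset$ is handled separately: $\widetilde E_\emptyset=\widetilde\cF_\emptyset=\{p+qx:p\pm q\ge0\}$, which is a two-dimensional simplicial cone, so one must check whether the statement intends $\Lambda$ nonempty — in fact $\widetilde E_\emptyset$ \emph{is} simplicial, so either the corollary tacitly excludes it or $\widetilde E_\emptyset$ coincides with some $\widetilde F_{\alpha,\Lambda}$; I would clarify this boundary case explicitly.

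The main obstacle is the bookkeeping around the extreme elements $1/(1-\lambda x)$ and the endpoints $\lambda=\pm1$: unlike the $(0,\infty)$ case where $1$ and $x$ arise as $g_{\infty,\infty}$ and $g_{0,0}$, here the analogous roles are played by $x+1=\tilde g_{-1,-1}$ and $1-x=\tilde g_{1,1}$, and the function $1/(1-\lambda x)$ appearing in the identity above must be rewritten in terms of the genuine extreme generators of $\widetilde E_\Lambda$ before one can conclude non-uniqueness. So the delicate point is to express $c_0/(1-\lambda x)$ as a non-negative combination of $\tilde g_{-1,-1}$, $\tilde g_{1,1}$, and elements $\tilde g_{\alpha',\lambda}$ with $\alpha'\in[-1,1]$ — which is possible by the same type of partial-fraction manipulation used in \eqref{F-4.9}–\eqref{F-4.10} — and then to verify that the two resulting decompositions of the left-hand side into extreme generators are genuinely different (they are, since the Dirac masses at $\alpha_1,\alpha_2$ versus at $\beta$ differ in the $\alpha$-variable). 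Once this identity is pinned down for a single convenient pair $(\alpha_1,\alpha_2)$ and each relevant $\lambda\in\Lambda$, the conclusion follows exactly as in Corollary~\ref{C-4.7}.
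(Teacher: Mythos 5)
Your proposal is correct and follows the same two-step strategy as the paper: combine Theorem~\ref{T-5.3} with the simpliciality of $\widetilde F_\alpha$ (established via the unique integral representations \eqref{F-5.2} and Lemma~\ref{L-5.1}) to handle the $\widetilde F_{\alpha,\Lambda}$, and exhibit a doubly-decomposable element to rule out the $\widetilde E_\Lambda$. The one substantive difference is the identity used in the second step. The paper stays entirely inside the generators with a fixed $\lambda$, namely
$$
\tilde g_{-1,\lambda}+\tilde g_{1,\lambda}+6\,\tilde g_{0,\lambda}
=4\bigl(\tilde g_{-1/2,\lambda}+\tilde g_{1/2,\lambda}\bigr)
$$
(the paper prints the coefficient of $\tilde g_{0,\lambda}$ as $2$, but matching the numerators $8x^2+2$ forces $6$), which sidesteps exactly the bookkeeping you flag as delicate: your identity produces the term $c_0/(1-\lambda x)$, which is not itself a generator and must first be rewritten, e.g.\ as
$$
\frac{1}{1-\lambda x}=\frac{1+\lambda}{2}\,\tilde g_{-1,-1}+\frac{1-\lambda}{2}\,\tilde g_{1,1}+\lambda^2\,\tilde g_{0,\lambda},
$$
before one obtains two genuinely distinct extremal decompositions; so your route works but costs an extra (correctly anticipated) step that the paper's choice of coefficients avoids. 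Finally, your remark about $\widetilde E_\emptyset=\{p+qx:p\pm q\ge0\}$ is a genuine catch rather than a defect of your argument: it is a closed face by Theorem~\ref{T-5.3}, it is the two-dimensional cone on the extreme rays $\tilde g_{-1,-1}=x+1$ and $\tilde g_{1,1}=1-x$ and so has a compact segment as base, hence it \emph{is} simplicial, yet it is not of the form $\widetilde F_{\alpha,\Lambda}$ since its two generators carry different values of $\alpha$. The paper's proof (like that of Corollary~\ref{C-4.7}) only treats $\widetilde E_\Lambda$ for $\Lambda\ne\emptyset$ and is silent on this boundary case, so your insistence on addressing it explicitly is warranted.
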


To show that $\widetilde E_\Lambda$ is not simplicial, one may note the identity
$$
g_{-1,\lambda}+g_{1,\lambda}+2g_{0,\lambda}
=4\bigl(g_{-1/2,\lambda}+g_{1/2,\lambda}\bigr).
$$

\bigskip
{\bf Acknowledgement.} The authors would like to thank Professor T.\ Ando who
proposed them the problem of determining simplicial subcones of the non-negative operator
convex functions on $(0,\infty)$. They are also grateful to a referee for constructive
comments that are helpful to improve the final version of the paper.
U.F.\ was supported by an ANR Project OSQPI (ANR-11-BS01-0008). F.H.\ also thanks the
Dept.\ of Mathematics of Universit\'e de Franche-Comt\'e where this work was mostly done
in the fall of 2010.

\bibliographystyle{amsplain}

\end{document}